\setlist[enumerate]{label={(\arabic*)}, wide}
\crefname{equation}{}{}
\numberwithin{equation}{section}
\newtheorem{theorem}{Theorem}[section]
\newtheorem{lemma}[theorem]{Lemma}
\newtheorem{proposition}[theorem]{Proposition}
\newtheorem{corollary}[theorem]{Corollary}
\newtheorem*{theorem*}{Theorem}
\newtheorem*{question}{Question}
\newtheorem*{conjecture}{Conjecture}
\theoremstyle{definition}
\newtheorem{definition}[theorem]{Definition}
\theoremstyle{remark}
\newtheorem*{remark}{Remark}
\newtheorem*{remarks}{Remarks}
\def\Z{\mathbf{Z}}
\def\N{\mathbf{N}}
\def\F{\mathbf{F}}
\newcommand\opr[1]{\operatorname{#1}}
\def\Sym{\opr{Sym}}
\def\GL{\opr{GL}}
\def\SL{\opr{SL}}
\def\Sp{\opr{Sp}}
\def\AGL{\opr{AGL}}
\def\nsgp{\trianglelefteq}
\newcommand\br[1]{{\left(#1\right)}}
\newcommand\gen[1]{\left\langle#1\right\rangle}
\renewcommand\bar{\overline}
\def\Gap{\opr{Gap}}
\def\FS{\mathfrak{S}}
\numberwithin{equation}{section}
\author{Sean Eberhard}
\author{Elena Maini}
\address{\parbox{\linewidth}{Mathematics Institute, Zeeman Building, University of Warwick, UK \vspace{0.1cm}}}
\email{firstname.surname@warwick.ac.uk}
\thanks{SE is supported by The Royal Society
(University Research Fellowship, URF\textbackslash{}R1\textbackslash{}221185).
EM is supported by the Warwick Mathematics Institute Centre for Doctoral Training, and gratefully acknowledges funding by the Swinnerton-Dyer scholarship.}
\begin{document}
\title{The growth of residually soluble groups}

\begin{abstract}
    Building on work of Wilson, we show that if $G$ is a finitely generated residually soluble group whose growth function $\gamma$ satisfies $(\log \gamma(n))/ n^{1/4} \to 0$ as $n \to \infty$ then $G$ is virtually nilpotent.
    This shows that Grigorchuk's Gap Conjecture holds for all exponents $\beta < 1/4$ within the class of residually soluble groups (improving Wilson's exponent $1/6$).
    We also discuss stronger versions of the Gap Conjecture.
\end{abstract}

\maketitle

\section{Introduction}
If $G$ is a group generated by a finite set $X$ and $g\in G$, we denote by $\ell_X(g)$ the length of $g$ with respect to $X$, i.e., the smallest integer $n$ such that $g$ can be written as a product of $n$ elements in $X \cup X^{-1}$. For a finite subset $S\subseteq G$, we write $\ell_X(S)=\max\{\ell_X(g): g\in S\}$.
The \emph{growth function} of $X$ is
\begin{equation*}
    \gamma_X(n) = |\{ g\in G:\ell_X(g)\le n \}|.
\end{equation*}

Usually, growth functions are only coarsely compared. Given two functions $f, g : \N \to [0, \infty)$, we write $f \preceq g$ if there is a constant $C > 0$ such that $f(n) \le C g(Cn)$ for all $n > 0$.
This relation defines a preorder on the set of functions.
The equivalence class of the growth function $\gamma_X(n)$ is a group property (i.e., independent of the choice of generating set $X$), and in fact a quasi-isometry invariant.

In 1968, Milnor~\cite{milnor-5603} asked some highly influential questions about the relation between the growth function $\gamma_X(n)$ and the group structure of $G$.
One of his conjectures was that $\gamma_X(n) \preceq n^d$ for some $d$ (if and) only if $G$ is virtually nilpotent;
this is now the celebrated theorem of Gromov~\cite{Gromov}.
Milnor also asked whether the growth of a group is always either bounded by a polynomial or equivalent to $e^n$.
This was proved for soluble groups by Milnor and Wolf~\cites{Milnor,Wolf},
for linear groups by Tits~\cite{Tits},
for elementary amenable groups by Chou~\cite{chou},
but Grigorchuk~\cite{Grig-group-of-intermediate-growth} famously discovered a counterexample, i.e., a group of \emph{intermediate growth}.
In fact, Grigorchuk~\cite{gen-grig} showed that there are a continuum of groups with pairwise incomparable growth functions,
which shows that the landscape of group growth functions is in truth completely dominated by the groups of intermediate growth.

Notwithstanding the existence of groups of intermediate growth, there is undoubtedly a definite gap between polynomial growth and superpolynomial growth.
This follows already from Gromov's theorem and some logical nonsense (see \cite{Gromov}*{p.~71}), but without an explicit function.
Pursuing this idea, Shalom and Tao~\cite{shalom-tao} proved that if $G$ is not virtually nilpotent then $\gamma_X(n) \succeq n^{(\log \log n)^c}$ for some $c > 0$.
This is the greatest lower bound currently known for the growth of non-virtually-nilpotent groups in general.

On the other hand, Grigorchuk's groups all have growth $\succeq \exp(\sqrt n)$.
In particular, the growth of Grigorchuk's first group itself is of the form $\exp(n^{\alpha+o(1)})$, where $\alpha \approx 0.767$ (Erschler--Zheng~\cite{erschler--zheng}),
and this is still the slowest known superpolynomial growth function.
For these reasons among others, Grigorchuk~\cite{grig-ICM} has advocated the following conjecture, known as the \emph{Gap Conjecture}, which we formalize with a free parameter $\beta$.

\begin{conjecture}[$\Gap(\beta)$]
    If $G$ is a group generated by a finite set $X$ and $\gamma_X(n) \prec \exp(n^\beta)$ then $G$ is virtually nilpotent.
\end{conjecture}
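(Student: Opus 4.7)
The Gap Conjecture in full generality is a famous open problem---the strongest unconditional lower bound remains Shalom and Tao's $\gamma_X(n) \succeq n^{(\log\log n)^c}$---and the present paper makes progress only in the residually soluble case. I sketch the natural route I would follow to attempt the full conjecture and indicate where the obstructions lie.

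The plan is to reduce to a just-infinite quotient and then apply a structure theorem. Suppose for contradiction that $G$ is finitely generated, non-virtually-nilpotent, with $\gamma_X(n) \prec \exp(n^\beta)$. By Zorn's lemma applied to the poset of normal subgroups $N$ of $G$ with $G/N$ infinite (upper bounds for chains exist because a finite-index subgroup of a finitely generated group is itself finitely generated), $G$ admits a just-infinite quotient $Q$. Growth passes to quotients, so $Q$ satisfies the same growth bound, and $Q$ is still non-virtually-nilpotent, otherwise the pullback through Gromov's theorem would force $G$ to have polynomial growth. By Wilson's trichotomy, $Q$ is then either a branch group, hereditarily just-infinite, or virtually a direct power of a finitely generated infinite simple group.

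Each case is handled separately. In the branch case, Grigorchuk's theorem gives $\gamma_Q(n) \succeq \exp(n^{1/2})$, which settles $\Gap(\beta)$ for $\beta < 1/2$. In the hereditarily just-infinite case, $Q$ is residually finite; if one can additionally show (or arrange) that $Q$ is residually soluble, then the main theorem of this paper applies and settles $\Gap(\beta)$ for $\beta < 1/4$. The virtually simple case is the main obstacle: no technique is currently known for bounding the growth of an infinite finitely generated simple group from below beyond Shalom--Tao. A full proof would presumably require either a new lower bound on the growth of finitely generated simple groups (perhaps via an extension of Breuillard--Green--Tao approximate-group theory beyond the polynomial-growth regime, or a quantitative Tits alternative), or a structural theorem forcing a hereditarily just-infinite group of sufficiently slow growth to be residually soluble. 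Until such an input becomes available, the conjecture remains out of reach in its full generality, and methods like those of the present paper can only treat restricted classes such as the residually soluble one.
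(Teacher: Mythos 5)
This statement is the Gap Conjecture itself: it is an open problem, the paper neither proves it nor claims to (its contribution, \Cref{th-main}, is the case of residually soluble groups with exponent $\beta<1/4$), and you are right to present a reduction strategy rather than a proof. However, your sketch has a concrete error at its first step: from the existence of a just-infinite quotient $Q$ of $G$ you cannot conclude that $Q$ is non-virtually-nilpotent. Gromov's theorem runs the wrong way for your ``pullback'' argument --- a virtually nilpotent \emph{quotient} says nothing about the growth of $G$, since the kernel can be enormous. Indeed $\Z$ is just-infinite, and a group such as the direct product of Grigorchuk's group with $\Z$ is non-virtually-nilpotent, of intermediate growth, and its just-infinite quotient can simply be $\Z$. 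Grigorchuk's actual reduction theorem deals with exactly this defect: when every just-infinite quotient is virtually nilpotent (hence virtually $\Z$), repeated use of Milnor's lemma pushes one into the residually polycyclic/residually soluble situation, so the conjecture reduces to just-infinite groups \emph{together with} residually polycyclic groups --- and that second class is precisely where Wilson's work and the present paper enter, via self-centralizing chief factors, the quantitative generalized Milnor lemma (\Cref{lem:quantitative-gen-Milnor}), and the bound on modified derived length, giving $\Gap(\beta)$ there for all $\beta<1/4$.

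A second inaccuracy: in the branch case you invoke ``Grigorchuk's theorem'' to assert $\gamma_Q(n)\succeq\exp(n^{1/2})$ for branch groups. As far as I am aware no such theorem exists. What is known is $\Gap^*(1/2)$ for \emph{residually nilpotent} groups (Grigorchuk; Lubotzky--Mann), which covers the Grigorchuk groups themselves but not branch groups in general, since branch groups are residually finite but need not be residually nilpotent. The conjecture remains open for branch groups, for hereditarily just-infinite groups (which need not be residually soluble, so \Cref{th-main} does not apply to them unconditionally), and, as you correctly say, for infinite finitely generated simple groups. So even read as a conditional reduction, your outline needs the corrected Grigorchuk-style reduction (just-infinite plus residually polycyclic) and must not treat the branch case as settled.
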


We have seen that Milnor's conjecture $\Gap(1)$ is false,
but it does hold within certain large natural classes of groups including soluble groups and linear groups.
``The'' Gap Conjecture is $\Gap(1/2)$,
and it was proved in \cites{Grig-gap-for-residually-nilpotent,lubotzky--mann} that $\Gap(1/2)$ holds for residually nilpotent groups
(such as all of Grigorchuk's groups).
In \cites{wilson2005,Wilson2011}, J.~S.~Wilson considered the wider class of residually soluble groups,
and in the latter paper he proved that $\Gap(1/6)$ holds within this class.

Our main contribution in this paper is to show that Wilson's exponent can be improved to $1/4$.

\begin{theorem}[$\Gap(1/4-\epsilon)$ for residually soluble groups]
\label[theorem]{th-main}
    If $G = \gen X$ is a finitely generated residually soluble group such that
    \[
        \log \gamma_X(n) / n^{1/4} \to 0 \qquad \text{as}~n\to\infty,
    \]
    then $G$ is virtually nilpotent.
    In particular, $\Gap(\beta)$ holds for residually soluble groups for all $\beta < 1/4$.
\end{theorem}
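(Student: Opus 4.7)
The plan is a contradiction argument. Suppose $G = \gen X$ is residually soluble, finitely generated, not virtually nilpotent, and satisfies $\log \gamma_X(n)/n^{1/4} \to 0$. The first reduction is immediate: any soluble quotient $G/N$ inherits this growth bound, so by the Milnor--Wolf dichotomy it has polynomial growth, and by Gromov's theorem it is virtually nilpotent. Hence every soluble quotient of $G$ is virtually nilpotent; in particular $G$ itself is not soluble, so $G^{(i)} \neq 1$ for every $i$, and residual solubility realises $G$ as a subdirect product of virtually nilpotent soluble quotients. The task is to extract from this structural picture a quantitative lower bound on $\gamma_X(n)$ that contradicts the assumption.

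Following Wilson's strategy, I would seek a soluble quotient $\bar G$ of $G$ (for example of the form $G/G^{(k)}$ for appropriate large $k$) admitting a normal series $1 \trianglelefteq A \trianglelefteq N \trianglelefteq \bar G$, in which $A$ is a large abelian chief factor of $\bar G$ acted upon nontrivially by the quotient $\bar G/N$. The non-virtual-nilpotence of $G$, combined with a structural lemma in the style of Wilson, should force the ``action rank'' (roughly, the dimension of $A$ as a module over the group ring of $\bar G/N$) to be arbitrarily large as we descend the derived series. The bilinear map $A \times \bar G/N \to [A, \bar G/N]$ then supplies a family of commutators $[a_1, g_1], \dots, [a_k, g_k]$ that are linearly independent in $A$, so that the products indexed by $(\epsilon_1,\dots,\epsilon_k) \in \{0,1\}^k$ are pairwise distinct, giving $2^k$ distinguishable elements of $\bar G$.

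The quantitative heart of the proof, and the source of the improvement from $1/6$ to $1/4$, is the length estimate on these $2^k$ elements. To realise exponent $1/4$, one needs each pair $(a_i, g_i)$, and hence each commutator $[a_i, g_i]$, to be expressible by a word of length $O(k^3)$, so that a product of $k$ of them has length $O(k^4)$; then $2^k \ge \exp(n^{1/4})$ with $n = Ck^4$. Wilson's exponent $1/6$ presumably corresponds to the weaker estimate $O(k^5)$ per commutator, coming from a less efficient extraction of independent commutators from the action data. Shaving the exponent down to $1/4$ should follow either from a more efficient pigeonhole over $\F_p[\bar G/N]$-module structure, or from a more restrictive choice of $\bar G$ (for instance, nilpotent-by-abelian, so that $\bar G/N$ is abelian and the $g_j$ can be chosen as short monomials in a bounded generating set).

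The main obstacle is in this commutator step: one needs both the existence of the $k$-parameter family of independent commutators in a non-virtually-nilpotent residually soluble quotient, and the length control $|a_i|, |g_i| = O(k^3)$ uniformly in $k$. The existence part should follow from dimension arguments on suitable chief factors of soluble quotients, strengthening Wilson's finite-width techniques by using that $G$, being residually soluble but not virtually nilpotent, must have soluble quotients of unbounded rank even after fixing the derived length. The length part is the combinatorially delicate piece: it requires that the independent commutators survive the cancellations endemic to nilpotent groups and can be assembled into a single word whose length grows only polynomially in $k$. Making this quantitative analysis precise, and balancing the length budget so that the exponent really is $1/4$, is where the real work of the proof will live.
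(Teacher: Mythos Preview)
Your high-level framing is right: one seeks a finite soluble quotient $\bar G$ with a self-centralizing chief factor $V\cong\F_p^n$ of arbitrarily large rank $n$, produces $n$ short elements of $G$ mapping to an $\F_p$-basis of $V$, and observes that their $2^n$ zero--one products are distinct. But the mechanism you propose for the length bound is not the one that works, and your diagnosis of where the exponent $1/4$ comes from is off.

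The paper does \emph{not} use a bilinear commutator map $A\times(\bar G/N)\to [A,\bar G/N]$ or any pigeonhole over module structure. Instead the two engines are: (i) a \emph{quantitative generalized Milnor lemma} showing that, under the growth hypothesis \eqref{eq:theta-growth}, the normal closure of any finite set $Y$ is generated by a finite set of $X$-length at most $\ell_X(Y)+C_1\ell_X(Y)^{\theta/(1-\theta)}$; iterating this along a subnormal series whose $i$th step passes from $H_{i-1}$ to $[H_{i-1},H_{i-1}]$ or to $\gamma_3(H_{i-1})$ shows that $H_i$ is generated by elements of length $\le C\cdot 4^{i}$ (resp.\ $10^{i}$), up to lower order terms; and (ii) a bound on the \emph{modified derived length} $\mu(\bar G)$---a weighted count giving weight $1$ to abelian factors and $\log_4 10$ to class-$2$ nilpotent factors---for irreducible soluble subgroups of $\GL_n(p)$, namely $\mu(G)\le 3\log_4 n + O(1)$, proved via Suprunenko's structure theorem. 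Combining (i) and (ii) gives generators of $V$ of length $\lesssim 4^{\mu(\bar G)}\lesssim n^3$, hence $2^n$ elements of length $\lesssim n^4$, i.e.\ $\gamma_X(Cn^4)\ge 2^n$.

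So the exponent $1/4$ is exactly $1/(1+3)$, where the $3$ is the constant in $\mu(\bar G)\le 3\log_4 n+O(1)$; Newman's sharper derived-length bound $\delta\le 5\log_9 n+O(1)$ alone gives only $1/(1+5\log_9 4)\approx 1/4.16$, and the final push to $1/4$ is the \emph{modified} derived length, exploiting the class-$2$ nilpotent layers in Suprunenko's description (the triple commutator has length $10<4^2$). Neither ``more efficient pigeonhole'' nor restricting to nilpotent-by-abelian quotients plays any role; what you are missing is the quantitative Milnor lemma to propagate short generating sets down a subnormal series, and the linear-group estimate that controls how long that series is.
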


Our proof of \Cref{th-main} builds on Wilson's ideas,
particularly the consideration of self-centralizing chief factors.
We will review everything we need.
The proof splits into two cases.
If the self-centralizing chief factors of $G$ have bounded rank
then $\Gap(1/2)$ holds for $G$.
Otherwise there are self-centralizing chief factors of arbitrarily large rank, and then we can use a generalization of a lemma of Milnor to produce many distinct elements of short length in the chief factor.
To optimize the approach, we use an ad hoc notion of ``modified derived length'', which is a variant of derived length that allows class-2 nilpotent factors at a discount, and we bound the modified derived length of soluble linear groups.

\subsection*{Acknowledgements}

The authors are grateful to both Luca Sabatini and Gareth Tracey for many useful conversations over the past year in connection with the ideas in this paper, especially about the structure of finite soluble groups.
We also wish to record that one of the key ideas in this paper, the generalized Milnor lemma (see \Cref{lem:quantitative-gen-Milnor}), or a close variant anyway, is also critical in forthcoming joint work with Sabatini and Tracey on diameters of permutation groups, and really it originated in that project.

\section{Self-centralizing chief rank}

A key insight of Wilson~\cite{wilson2005} was to consider self-centralizing chief factors.
A chief factor $M / N$ of a finite group $G$ is a minimal normal subgroup of a quotient $G / N$.
If $M / N$ is abelian (e.g, if $G$ is soluble) then $M / N \cong \F_p^n$ for some prime $p$ and integer $n \ge 1$; the \emph{rank} of $M / N$ is $n$.
A chief factor $M / N$ of $G$ is called \emph{self-centralizing} if $M / N = C_{G/N}(M/N)$.
If $G / M$ is soluble, a Frattini argument shows that $M / N$ is complemented in $G / N$, and it follows that $G$ has a self-centralizing chief factor of order $p^n$ if and only if $G$ has a maximal subgroup of index $p^n$.

%

\begin{definition}
    Let $G$ be a finite soluble group.
    The \emph{self-centralizing chief rank}
    (a.k.a.~\emph{non-Frattini chief rank})
    of $G$ is the maximum rank of its self-centralizing chief factors.
    We write $\FS(n)$ for the class of finite soluble groups of self-centralizing chief rank at most $n$.
\end{definition}

\begin{remarks}\leavevmode
    \begin{enumerate}[1.]
        \item A chief factor $M / N$ is called \emph{Frattini} if $M / N \le \Phi(G / N)$. Every self-centralizing chief factor is non-Frattini, and conversely every non-Frattini chief factor is equivalent to a self-centralizing chief factor.
        Refer to \cite{doerk-hawkes}*{A.15}.
        \item Every nontrivial soluble group has positive self-centralizing chief rank, and the class $\FS(1)$ coincides with the class of finite supersoluble groups: see \cite{doerk-hawkes}*{VII~2.2(a, c)}.
    \end{enumerate}
\end{remarks}

If $G$ is a soluble group we denote by $\delta(G)$ its \emph{derived length}: the least integer $k$ such that $G^{(k)} = 1$.
We need the following well-known result of Zassenhaus and Newman (see \cite{newman}). The weak form, due to Zassenhaus, is sufficient in this section, but in the next section we will use the precise bound found by Newman.

\begin{theorem}
    \label[theorem]{thm:zassenhaus-newman}
    Let $\rho(n)$ be the supremum of the derived lengths $\delta(G)$ of the soluble linear groups of degree $n$.
    \begin{enumerate}[(a)]
        \item\label{ZNa} \textup{(Zassenhaus)} $\rho(n) < \infty$.
        \item\label{ZNb} \textup{(Newman)} $\rho(n) < 5 \log_9(n) + 6$.
    \end{enumerate}
\end{theorem}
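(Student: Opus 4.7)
My plan is to induct on $n$, exploiting the classical reducible/imprimitive/primitive trichotomy for the action of a soluble $G \le \GL_n(k)$ on $V = k^n$. First I would reduce to the case that $k$ is algebraically closed and $G$ is Zariski-closed; this costs nothing, since $\overline{G^{(k)}} = (\bar G)^{(k)}$ for algebraic groups, so derived length is preserved under Zariski closure. The base case $n=1$ is trivial because $\GL_1$ is abelian.

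For the inductive step, suppose $G \le \GL_n(\bar k)$. If $V$ has a nontrivial proper $G$-submodule $W$, then the kernel of $G \to \GL(W) \times \GL(V/W)$ consists of matrices of the form $I + A$ with $A\colon V/W \to W$, hence is abelian, giving $\delta(G) \le 1 + \max(\rho(\dim W), \rho(\dim V/W))$. So we may assume $V$ is irreducible. If the action is imprimitive, write $V = V_1 \oplus \cdots \oplus V_k$ with $G$ transitively permuting the summands ($k \ge 2$); the kernel $H$ of the permutation action $G \to S_k$ embeds into $\prod_i \GL(V_i)$, so $\delta(H) \le \rho(n/k)$, while the image is a soluble subgroup of $S_k$, which by Dixon's theorem has derived length at most $\tfrac{5}{2}\log_3 k$. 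Hence $\delta(G) \le \rho(n/k) + \tfrac{5}{2}\log_3 k$. Finally, in the primitive irreducible case, the classical structure theorem for primitive soluble linear groups (Suprunenko/Huppert) provides a nilpotent normal subgroup $N \trianglelefteq G$ with an embedding $G/N \hookrightarrow \Sp_{2a}(p)$, where $n = p^a$; thus $G/N$ is a soluble linear group of dimension $2a = 2\log_p n$, handled by induction with only a double-logarithmic contribution.

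Putting these together, the extremal case is an iterated application of the imprimitive step, which telescopes to $\rho(n) \le \tfrac{5}{2}\log_3 n + O(1) = 5\log_9 n + O(1)$; this matches Dixon's bound for soluble subgroups of $S_n$, and indeed the latter (realised by permutation matrices) is where extremal examples come from. Sharpening the $O(1)$ to the explicit $+6$ is a matter of base-case bookkeeping and showing that the reducible case's additive $+1$ and the primitive case's tiny contribution can be absorbed across the recursion. I expect the main obstacle to be the primitive case, which requires a nontrivial structure theorem for primitive soluble linear groups; the reducible case is pure linear algebra and the imprimitive case reduces directly to Dixon. For part (a) alone, a qualitatively simpler route uses Lie--Kolchin to bound $\delta(G^0) \le \lceil \log_2 n \rceil$ on the Zariski-connected part, combined with Jordan's theorem (or a similar finite-group argument) to control the finite quotient $G/G^0$.
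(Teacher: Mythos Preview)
The paper does not give a proof of this theorem: it is quoted as background with a reference to Newman~\cite{newman}, and the completely reducible refinement (\Cref{lem:newman-sigma}) is likewise only cited. So there is no proof in the paper to compare against.

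Your outline is the classical one and broadly matches Newman's own argument, but there is a genuine gap in the reducible step. From $\delta(G) \le 1 + \max(\rho(m),\rho(n-m))$ you only get $\rho(n) \le 1 + \rho(n-1)$, hence $\rho(n) \le n$; a full Borel subgroup shows this is not merely slack in the estimate, and the resulting $+1$'s cannot be ``absorbed'' into the additive constant as you suggest. The fix is to handle reducibility in one step: take a composition series $0=V_0<\cdots<V_r=V$, note that the kernel of $G \to \prod_i \GL(V_i/V_{i-1})$ is unipotent of derived length at most $\lceil\log_2 r\rceil$, and then run the imprimitive/primitive induction only on the completely reducible image. This is precisely why Newman formulates his sharp bound for completely reducible groups (as the paper records in \Cref{lem:newman-sigma}). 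Two smaller points: the $\tfrac{5}{2}\log_3 k$ bound on the derived length of soluble subgroups of $S_k$ is Newman's, not Dixon's (Dixon bounds the order), and it is proved \emph{simultaneously} with the linear bound rather than as an independent input, so invoking it as a black box risks circularity; and in the primitive case $n$ need not be a prime power, so $G/N$ lands in a product $\prod_i \Sp_{2k_i}(q_i)$ as in \Cref{thm:suprunenko}, though this does not affect the shape of the recursion.
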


The following results should be compared with \cite{wilson2005}*{Theorem~1.2}.
Our result is slightly more general. We also avoid both ultrafilter analysis and the Tits alternative.

\begin{theorem}
    If $G \in \FS(n)$ then $G^{(\rho(n))}$ is nilpotent.
\end{theorem}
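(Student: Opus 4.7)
The plan is to find a normal subgroup $L\nsgp G$ such that $G/L$ has derived length at most $\rho(n)$ while $L$ itself is nilpotent; then $G^{(\rho(n))}\le L$ is nilpotent as a subgroup of a nilpotent group.

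First I would fix a chief series $1=G_0<G_1<\cdots<G_r=G$. Each factor $G_i/G_{i-1}$ is elementary abelian, say $G_i/G_{i-1}\cong\F_{p_i}^{k_i}$, and by the first remark above every non-Frattini chief factor in the series is equivalent to a self-centralizing one of the same rank; hence the hypothesis $G\in\FS(n)$ forces $k_i\le n$ whenever $G_i/G_{i-1}$ is non-Frattini. For each such non-Frattini factor the conjugation action yields an embedding
\[
    G/C_G(G_i/G_{i-1})\hookrightarrow\Aut(G_i/G_{i-1})=\GL_{k_i}(\F_{p_i}),
\]
realising the quotient as a soluble linear group of degree at most $n$, hence of derived length at most $\rho(n)$ by \Cref{thm:zassenhaus-newman}\eqref{ZNa}. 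Setting $L=\bigcap_i C_G(G_i/G_{i-1})$, with the intersection taken over the non-Frattini factors in the series, the group $G/L$ embeds diagonally into the direct product of the $G/C_G(G_i/G_{i-1})$ and so also has derived length at most $\rho(n)$; in particular $G^{(\rho(n))}\le L$.

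The only subtle step, and the one I expect to be the main obstacle, is the nilpotence of $L$. For this I would invoke the classical identification, for finite soluble groups, of the Fitting subgroup with the intersection of the centralizers of the complemented chief factors of any chief series (see \cite{doerk-hawkes}); since in a soluble group ``complemented'' and ``non-Frattini'' coincide for abelian chief factors, this gives $L=F(G)$, nilpotent by definition. Hence $G^{(\rho(n))}\le F(G)$ is nilpotent, completing the proof. Everything else is a routine combination of the rank hypothesis with the Zassenhaus--Newman bound on derived length.
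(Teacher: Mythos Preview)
Your argument is correct and follows essentially the same route as the paper's proof. Both reduce to the single nontrivial ingredient that a normal subgroup centralizing all the ``good'' (non-Frattini, equivalently self-centralizing) chief factors must be nilpotent: you phrase this as the identification $L=F(G)$ and cite Doerk--Hawkes, while the paper works directly with $N=G^{(\rho(n))}$ and cites Wilson's Lemma~2.3 for the same fact. The only cosmetic difference is that you pass through the intermediate subgroup $L$ and the diagonal embedding into a product of $\GL_{k_i}(\F_{p_i})$'s, whereas the paper argues factor-by-factor that $\bar N\le V$ for each self-centralizing minimal normal $V$; these are equivalent packagings of the same bound $\delta(G/C_G(V))\le\rho(n)$.
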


\begin{proof}
    Let $d = \rho(n)$ and let $N = G^{(d)}$. We will show that $N$ is nilpotent.

    Suppose $\bar G$ is a quotient of $G$ with a minimal normal self-centralizing subgroup $V$ of order $p^m$, where $m \le n$.
    Since $\bar G / V$ embeds in $\GL_m(p)$, it has derived length at most $\rho(n)$, so $\bar N \le V$.
    In particular $[V, \bar N] = 1$.

    Since $N$ acts trivially by conjugation on every self-centralizing chief factor of $G$, it follows from \cite{wilson2005}*{Lemma~2.3} that $N$ acts trivially on every chief factor of $G$.
    Intersecting a chief series for $G$ with $N$ thus gives a central series for $N$, so $N$ is nilpotent.
\end{proof}

\begin{corollary}
    \label[corollary]{cor:resid-FS(n)}
    If $G$ is residually $\FS(n)$ then $G^{(\rho(n))}$ is residually nilpotent.
\end{corollary}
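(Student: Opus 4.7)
The plan is to derive \Cref{cor:resid-FS(n)} from the preceding theorem by the standard ``residual'' diagram chase: pick a nonidentity element of $G^{(\rho(n))}$, use the residual hypothesis to separate it by a quotient in $\FS(n)$, and push the conclusion of the theorem through that quotient.

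In more detail: let $h$ be an arbitrary nonidentity element of $G^{(\rho(n))}$. Since $G$ is residually $\FS(n)$, there is a normal subgroup $N \nsgp G$ with $h \notin N$ and $G/N \in \FS(n)$. Applying the previous theorem to $G/N$ gives that $(G/N)^{(\rho(n))}$ is nilpotent. But $(G/N)^{(\rho(n))}$ is exactly the image of $G^{(\rho(n))}$ in $G/N$, i.e., $G^{(\rho(n))} N / N$, which is isomorphic to $G^{(\rho(n))} / (G^{(\rho(n))} \cap N)$. Setting $M = G^{(\rho(n))} \cap N$, we have $M \nsgp G^{(\rho(n))}$, the quotient $G^{(\rho(n))}/M$ is nilpotent, and $h \notin M$ because $h \notin N$. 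As $h$ was arbitrary, $G^{(\rho(n))}$ is residually nilpotent.

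There is essentially no obstacle here: the only thing to be careful about is that the residual hypothesis is used for the whole group $G$, whereas we want a conclusion about the subgroup $G^{(\rho(n))}$, and one must observe that intersecting the separating normal subgroup $N$ with $G^{(\rho(n))}$ yields a normal subgroup of $G^{(\rho(n))}$ (which is automatic because $N$ is normal in $G$) with the right properties. The whole argument is a one-paragraph consequence of the theorem, so I would keep the proof very brief.
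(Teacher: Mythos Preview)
Your argument is correct and is exactly the standard residual chase the paper has in mind; indeed, the paper states this corollary immediately after the theorem with no proof, so your one-paragraph derivation is precisely what is intended.
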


The following basic fact about virtually soluble groups is well-known.

\begin{lemma}
    If $G$ is a group and $N \nsgp G$ such that both $N$ and $G/N$ are virtually soluble, then $G$ itself is virtually soluble.
\end{lemma}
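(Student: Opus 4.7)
The plan is to produce a soluble subgroup of finite index in $G$. Since $G/N$ is virtually soluble, pick $H$ with $N \le H \le G$, $[G:H]<\infty$, and $H/N$ soluble of some derived length $d$; as $[G:H]<\infty$ it suffices to show $H$ is virtually soluble. Note $H^{(d)}\le N$, so $H^{(d)}$ is virtually soluble (as a subgroup of the virtually soluble group $N$).

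The main obstacle is the following lemma: every virtually soluble group $M$ has a characteristic soluble subgroup of finite index. For this I would start with any soluble subgroup of finite index and replace it by its normal core in $M$, obtaining a normal soluble subgroup $N_0\nsgp M$ of finite index $k$. Then every normal soluble subgroup $M'\nsgp M$ satisfies $\delta(M') \le \delta(N_0)+k$: $M'\cap N_0$ is soluble of derived length $\le\delta(N_0)$, and $M'/(M'\cap N_0)\hookrightarrow M/N_0$ is a soluble group of order $\le k$, hence of derived length $\le k$. It follows that the join of all normal soluble subgroups of $M$ is itself a normal soluble subgroup of bounded derived length; it is characteristic (as $\Aut(M)$ permutes the normal soluble subgroups) and contains $N_0$, hence of finite index.

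Applying the lemma to $H^{(d)}$ produces a subgroup $R\le H^{(d)}$ that is soluble, characteristic in $H^{(d)}$, and of finite index. Since $H^{(d)}$ is characteristic in $H$, so is $R$; in particular $R$ is normal in $H$. Let $K := C_H(H^{(d)}/R)$ be the kernel of the conjugation action of $H$ on the finite group $H^{(d)}/R$; then $[H:K] \le |\Aut(H^{(d)}/R)| < \infty$, and $R \le K$ (since $R$ is normal in $H^{(d)}$). The quotient $K/R$ has a subnormal series $R \le K \cap H^{(d)} \le K$ with abelian factor $(K\cap H^{(d)})/R \le Z(H^{(d)}/R)$ and soluble factor $K/(K\cap H^{(d)}) \hookrightarrow H/H^{(d)}$, so $K/R$ is soluble. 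Combined with $R$ being soluble, we conclude that $K$ is soluble, providing the required soluble subgroup of finite index in $G$.
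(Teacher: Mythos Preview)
Your proof is correct, and the overall strategy matches the paper's: reduce to the case where the quotient is soluble, locate a soluble subgroup of finite index in (a piece of) $N$ that is actually normal in the ambient group, and then use a centralizer argument to deal with the remaining finite part. The difference lies in how that normal soluble subgroup is produced. The paper simply takes a soluble normal subgroup $M\nsgp N$ of \emph{minimal index} and observes that for any $g\in G$ the product $MM^{g}\le N$ is again soluble and normal in $N$, which by minimality forces $M^{g}=M$; thus $M\nsgp G$, and after passing to $G/M$ one has $N$ finite and $C_G(N)$ is the desired finite-index soluble subgroup. Your route instead passes through $H^{(d)}\le N$, proves the general lemma that the soluble radical of a virtually soluble group is characteristic of finite index (via a uniform bound on derived lengths), and then runs the centralizer argument on $H^{(d)}/R$. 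Your lemma is a useful standalone fact, but the detour through $H^{(d)}$ is unnecessary---you could apply your radical lemma directly to $N$---and the paper's minimality trick is shorter and avoids any bookkeeping with derived lengths.
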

\begin{proof}
    By passing to a finite-index subgroup we may assume $G/N$ is soluble.
    Let $M \nsgp N$ be a soluble normal subgroup of $N$ of minimal index.
    If $g \in G$ then $M M^g \le N$ is again soluble, so we must have $M^g = M$.
    Therefore $M \nsgp G$.
    Replacing $G$ with $G / M$, we may thus assume that $N$ is finite.
    Now $C_G(N)$ is a finite-index soluble subgroup of $G$.
\end{proof}

\begin{theorem}
    \label[theorem]{cor:gap-FS(k)}
    $\Gap(1/2)$ holds for residually $\FS(n)$ groups.
    In fact, if $G = \gen X$ is residually $\FS(n)$ and not virtually nilpotent then $\gamma_X(n) \succeq \exp(n^{1/2})$.
\end{theorem}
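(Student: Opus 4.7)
The plan is to apply \Cref{cor:resid-FS(n)} to reduce the statement to a combination of the soluble and residually nilpotent cases, which are dispatched by the Milnor--Wolf theorem and by $\Gap(1/2)$ for residually nilpotent groups (Grigorchuk, Lubotzky--Mann) respectively, and then to stitch the pieces together via the preceding lemma on virtually soluble extensions.

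In detail, I would argue by contradiction: assume $\gamma_X(m) = o(\exp(m^{1/2}))$ and aim to show $G$ is virtually nilpotent. Set $d = \rho(n)$ and $N := G^{(d)}$; by \Cref{cor:resid-FS(n)}, $N$ is residually nilpotent. The quotient $G/N$ is a finitely generated soluble group (of derived length at most $d$) whose growth is dominated by $\gamma_X$ and is therefore subexponential, so $G/N$ is virtually nilpotent by Milnor--Wolf. Separately, I will argue that $N$ itself is virtually nilpotent; granting this, the preceding lemma yields $G$ virtually soluble, and a final application of Milnor--Wolf (using the subexponential growth of $G$) produces $G$ virtually nilpotent, contradicting the standing assumption.

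The main obstacle is the step asserting that $N$ is virtually nilpotent, because $N = G^{(d)}$ need not be finitely generated even when $G$ is. For each finitely generated subgroup $H \le N$, however, $H$ is residually nilpotent (inherited from $N$) and has growth dominated by $\gamma_X$, so $\Gap(1/2)$ for residually nilpotent groups gives that $H$ is virtually nilpotent. I expect that extracting uniform quantitative bounds on the nilpotency class and finite index from the Lubotzky--Mann proof---which proceeds via an $\F_p$-Lie-algebra estimate on the lower central series---will suffice to conclude that $N$ itself is virtually nilpotent. The same Lie-algebra engine should in fact give the sharper ``in fact'' statement $\gamma_X(m) \succeq \exp(m^{1/2})$ whenever $G$ is not virtually nilpotent, because in each of the two scenarios ($G/N$ or $N$ failing to be virtually nilpotent) the relevant theorem of Milnor--Wolf or Grigorchuk--Lubotzky--Mann produces growth at least $\exp(m^{1/2})$ in a quotient or subgroup of $G$.
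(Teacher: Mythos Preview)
Your overall architecture matches the paper's, but you stumble at exactly the point you flag as the ``main obstacle'' and then try to talk your way around it rather than resolve it. The paper's proof disposes of the finite generation of $N = G^{(\rho(n))}$ in one line: since $G/N$ is a finitely generated soluble group of subexponential growth, Milnor--Wolf shows it is polycyclic, and then iterated application of Milnor's lemma (\Cref{lem:milnor}) down a cyclic series shows that $N$ is finitely generated. Once $N$ is finitely generated, the residually nilpotent Gap Conjecture of Grigorchuk and Lubotzky--Mann applies directly to $N$ itself, giving either $\gamma_X \succeq \exp(n^{1/2})$ or $N$ virtually nilpotent, and the rest of your outline goes through exactly as you describe.

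Your proposed workaround --- deduce that every finitely generated $H \le N$ is virtually nilpotent and then hope for ``uniform quantitative bounds'' from the Lubotzky--Mann argument to upgrade this to $N$ --- is not a proof. Knowing that every finitely generated subgroup of $N$ is virtually nilpotent makes $N$ locally virtually nilpotent, which is strictly weaker than virtually nilpotent; the Lubotzky--Mann bounds depend on the number of generators and on the growth constants of the specific subgroup, and there is no evident mechanism to make them uniform over all $H \le N$ or to assemble the finite-index nilpotent pieces coherently. So as written this step is a genuine gap, and the fix is not to push harder on it but to invoke Milnor's lemma to get finite generation of $N$ outright.
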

\begin{proof}
    Suppose $G$ is residually $\FS(n)$ and of subexponential growth.
    By \Cref{cor:resid-FS(n)}, $G$ has a residually nilpotent normal subgroup $N$ such that $G/N$ is soluble.
    By Milnor's lemma (recalled in \Cref{lem:milnor} below), $N$ is finitely generated.
    From \cites{Grig-gap-for-residually-nilpotent,lubotzky--mann} we conclude that either $G$ has growth $\succeq \exp(n^{1/2})$ or $N$ is virtually nilpotent.
    Since the class of virtually soluble groups is closed under extension,
    in the latter case $G$ itself is virtually soluble.
    Finally, by Milnor--Wolf, $G$ is virtually nilpotent.
\end{proof}

\section{Variations on a lemma of Milnor}

The following fundamental lemma is due to Milnor: see \cite{Milnor}*{Lemma~1} or \cite{mann-book}*{Theorem~5.1}.
Compare also Gromov's lemma (b) in \cite{Gromov}*{Section~4}.
It follows easily from this lemma and induction on derived length that any soluble group of subexponential growth is polycyclic,
which is roughly half of the Milnor--Wolf theorem.

\begin{lemma}[Milnor's lemma]
    \label[lemma]{lem:milnor}
    Let $G$ be a finitely generated group of subexponential growth and assume that $G / N \cong \Z$. Then $N$ is finitely generated.
\end{lemma}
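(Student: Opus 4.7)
The plan is to argue by contradiction: assume $N$ is not finitely generated and deduce that $G$ has exponential growth, against the hypothesis. After replacing the given generating set by one of the form $\{t,a_1,\ldots,a_r\}$, where $t$ lifts a generator of $G/N\cong\Z$ and each $a_j\in N$ (a change that affects word lengths only up to a constant factor, so not the growth class), I would define the nested subgroups
\[
H_m = \gen{a_j^{t^k} : 1 \le j \le r,\ |k|\le m},
\]
whose union is $N$. If $H_m = H_{m+1}$ for some $m$ then $t^{\pm 1} H_m t^{\mp 1} \subseteq H_{m+1} = H_m$, so $H_m$ is $t$-invariant; together with $a_j\in H_m$ this makes $H_m$ normal in $G$, and as $G/H_m$ is then cyclic and surjects onto $\Z$, necessarily $H_m = N$, and we are done. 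From here on assume $H_m\subsetneq H_{m+1}$ for every $m$.

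For each $m\ge 1$ the subgroup $H_m$ is generated over $H_{m-1}$ by the $2r$ elements $a_j^{t^{\pm m}}$, so I pick $b_m = a_{j_m}^{t^{\varepsilon_m m}}$ with $\varepsilon_m\in\{+1,-1\}$ lying outside $H_{m-1}$. By pigeonhole, for arbitrarily large $M$ at least half of the $m\in\{1,\ldots,M\}$ share a common sign; after possibly swapping $t$ with $t^{-1}$, this yields indices $m_1<m_2<\cdots<m_K$ with $K\ge M/2$ and $\varepsilon_{m_l}=+1$ for each $l$. For $\sigma\in\{0,1\}^K$ set $w_\sigma = b_{m_1}^{\sigma_1}\cdots b_{m_K}^{\sigma_K}$. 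These $2^K$ elements are pairwise distinct: if $w_\sigma = w_{\sigma'}$ and $l^*$ is the largest index where $\sigma$ and $\sigma'$ differ, then the factors with $l>l^*$ cancel from both sides and rearranging expresses $b_{m_{l^*}}^{\pm 1}$ as a product of $b_{m_l}$ with $l<l^*$, each lying in $H_{m_{l^*}-1}$, contradicting the choice of $b_{m_{l^*}}$.

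The main obstacle, and the purpose of the sign-pigeonhole, is to force the word lengths of the $w_\sigma$ to be \emph{linear} in $K$ rather than quadratic. With the uniform positive exponents the intermediate $t$-powers telescope: writing $l_1<\cdots<l_s$ for the indices with $\sigma_l = 1$ and setting $c_i = a_{j_{m_{l_i}}}$,
\[
w_\sigma = t^{-m_{l_1}}\,c_1\,t^{m_{l_1}-m_{l_2}}\,c_2\,\cdots\,t^{m_{l_{s-1}}-m_{l_s}}\,c_s\,t^{m_{l_s}}.
\]
The total number of $t^{\pm 1}$ letters is $2m_{l_s}$ (a telescoping sum of absolute values of consecutive differences) and the number of $c_i$ letters is $s\le K$, so $\ell_X(w_\sigma)\le 2m_K + K\le 5K$, using $m_K\le M\le 2K$. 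Hence $\gamma_X(5K)\ge 2^K$ for arbitrarily large $K$, forcing $\limsup_{n\to\infty}(\log\gamma_X(n))/n>0$ and contradicting subexponential growth. Without the sign constraint one would only obtain $O(M^2)$-long words and the much weaker bound $\gamma_X(n)\succeq 2^{\sqrt n}$, insufficient to close the argument.
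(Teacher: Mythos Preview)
Your proof is correct. Note, however, that the paper does not actually give its own proof of this lemma---it simply cites Milnor's original paper and Mann's book---so there is nothing to compare against directly. Your argument is the standard one: the key step, as you correctly identify, is the telescoping of the $t$-powers that occurs when all the chosen conjugates $b_{m_l}$ carry the same sign of exponent, which keeps the word length of $w_\sigma$ linear in $K$ rather than quadratic. The pigeonhole on signs, the distinctness argument via the largest differing index, and the reduction $m_K \le M \le 2K$ are all fine; Fekete's lemma (subadditivity of $\log\gamma_X$) then converts ``$\gamma_X(5K)\ge 2^K$ for arbitrarily large $K$'' into genuine exponential growth.

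It is worth contrasting this with the paper's \emph{generalized} Milnor lemma (\Cref{lem:soft-generalized-milnor}), where no cyclic-quotient structure is assumed. There the conjugating elements are arbitrary words of length $\le i$, no telescoping is available, and one only obtains $\gamma_X(O(k^2))\ge 2^k$, i.e., growth $\succeq\exp(n^{1/2})$---precisely the weaker bound you flag at the end as ``insufficient to close the argument''. Your proof thus makes explicit exactly which structural feature of the hypothesis $G/N\cong\Z$ is responsible for the stronger conclusion.
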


If we assume a stronger growth hypothesis then a more general property holds: every finitely generated subgroup has finitely generated normal closure.
This stronger property does not hold in general for groups of subexponential growth:
see for example~\cite{bartholdi-erschler}*{Theorem~B}.

\begin{lemma}[Generalized Milnor lemma]
    \label[lemma]{lem:soft-generalized-milnor}
    Let $G = \gen X$ be a finitely generated group and assume that $H \le G$ is a finitely generated subgroup whose normal closure $N = \gen{H^G}$ is not finitely generated.
    Then $G$ has growth \[\gamma_X(n) \succeq \exp(n^{1/2}).\]
\end{lemma}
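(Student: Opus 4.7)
The plan is to build a natural filtration of $N$ by finitely generated subgroups, observe that the hypothesis forces the filtration to strictly increase at every step, and then read off exponentially many short distinct elements of $G$.

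Concretely, I would fix generators $h_1, \dots, h_k$ of $H$ and for each $M \ge 0$ set
\[
    N_M = \gen{g h_i g^{-1} : \ell_X(g) \le M,\ 1 \le i \le k},
\]
so that $N_M \le N_{M+1}$ and $N = \bigcup_M N_M$. Since $N$ is not finitely generated, $N_M \subsetneq N$ for every $M$. The first step, and really the only nontrivial point, would be to show that in fact $N_M \subsetneq N_{M+1}$ at every stage: if $N_M = N_{M+1}$ then for any $x \in X \cup X^{-1}$ and any generator $g h_i g^{-1}$ of $N_M$ one has $x(g h_i g^{-1})x^{-1} = (xg) h_i (xg)^{-1} \in N_{M+1} = N_M$, so $N_M$ is closed under conjugation by the generators of $G$ and hence normal in $G$. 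Since $H \le N_M$, this would force $N \le N_M$, a contradiction.

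Armed with this, I would pick $y_M = g_M h_{i_M} g_M^{-1} \in N_{M+1} \setminus N_M$ with $\ell_X(g_M) \le M+1$, so $\ell_X(y_M) \le 2M + O(1)$. The counting step considers the $2^{M+1}$ products
\[
    y_M^{\epsilon_M} y_{M-1}^{\epsilon_{M-1}} \cdots y_0^{\epsilon_0}, \qquad (\epsilon_0, \dots, \epsilon_M) \in \{0,1\}^{M+1}.
\]
A standard largest-differing-index argument should show that these are pairwise distinct: if two sequences $\epsilon, \epsilon'$ gave the same element, then taking the largest $j$ with $\epsilon_j \ne \epsilon'_j$ and cancelling the common prefix would force $y_j \in N_j$ (since the remaining equation expresses $y_j$ as a quotient of two products of $y_i$ with $i < j$, each of which lies in $N_j$), contradicting $y_j \notin N_j$. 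Using exponents in $\{0,1\}$ rather than $\{-1,0,1\}$ conveniently sidesteps any worry about the order of $y_j$ modulo $N_j$.

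Each such product has length at most $\sum_{i=0}^M (2(i+1) + O(1)) = O(M^2)$, so $\gamma_X(C M^2) \ge 2^{M+1}$ for a suitable constant $C$, which is precisely $\gamma_X(n) \succeq \exp(n^{1/2})$. I expect no serious obstacle: once the no-plateau step is isolated, the remaining counting is routine bookkeeping. The one conceptual ingredient is recognising that finite generation of $H$ combined with the normal-closure structure makes the chain $(N_M)$ increase at \emph{every} step, not merely infinitely often, which is what delivers a linearly dense supply of independent generators of $N$ inside each ball.
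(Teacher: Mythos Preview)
Your argument is correct and essentially identical to the paper's: the filtration $N_M$ is the paper's $H_M$, the ``no-plateau'' step is the same normality argument, and the distinctness of the $2^{M+1}$ products is the same largest-index cancellation (the paper writes the product in the opposite order and peels from the right, but this is purely cosmetic). The length estimate and conclusion are likewise the same.
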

\begin{proof}
    Suppose $H = \gen{Y}$ where $Y$ is finite.
    Let
    \begin{equation}
        \label{eq:Y_i-def}
        Y_i = \{y^g : y \in Y,~\ell_X(g) \le i\}, \qquad H_i = \gen{Y_i} \qquad (i \ge 0).
    \end{equation}
    Then $H_0 \le H_1 \le \cdots$ is an increasing chain of subgroups of $N$. If $H_k = H_{k+1}$ for some $k$ then for $x \in X \cup X^{-1}$ we have
    \[
        H_k^x = \gen{Y_k^x} \le \gen{Y_{k+1}} = H_{k+1} = H_k,
    \]
    so $H_k \nsgp G$, so $H_k = N$.
    Since $N$ is not finitely generated, there is no such $k$.

    Therefore, for each $i > 0$ there is some $y_i \in Y_i \setminus H_{i-1}$.
    Now for all $k \ge 1$ we claim that the $2^k$ elements $y_1^{\epsilon_1} \cdots y_k^{\epsilon_k}$ with $\epsilon_i \in \{0,1\}$ are distinct. Indeed, suppose
    \[
        y_1^{\epsilon_1} \cdots y_k^{\epsilon_k} = y_1^{\delta_1} \cdots y_k^{\delta_k}
    \]
    where $\epsilon,\delta \in \{0,1\}^k$. Then
    \[
        H_{k-1} y_k^{\epsilon_k} = H_{k-1} y_k^{\delta_k},
    \]
    which implies $\epsilon_k = \delta_k$ since $y_k \notin H_{k-1}$.
    Cancelling $y_k^{\epsilon_k}$ and applying induction, it follows that $\epsilon = \delta$.

    Now observe that
    \[
        \ell_X(y_1^{\epsilon_1} \cdots y_k^{\epsilon_k}) \le \sum_{i=1}^k \ell_X(y_i) \le k (L + 2k),
    \]
    where $L = \ell_X(Y)$.
    Thus
    \[
        \gamma_X(kL + 2k^2) \ge 2^k
    \]
    for all $k \ge 1$,
    which implies that $\gamma_X(n) \succeq \exp(n^{1/2})$.
\end{proof}

If we assume an even stronger growth hypothesis then we have a quantative form of the previous lemma.
This is what we will use in the rest of the paper.

\begin{lemma}[Generalized Milnor lemma, quantitative version] \label[lemma]{lem:quantitative-gen-Milnor}
    Let $G = \gen X$ be a finitely generated group whose growth function satisfies
    \begin{equation}
        \label{eq:theta-growth}
        \gamma_X(n) \le \exp(C n^\theta) \qquad(\text{all}~n>0),
    \end{equation}
    for some $\theta < 1/2$ and $C \ge 1$.
    Let $N = \gen{Y^G}$ be a normal subgroup of $G$,
    where $Y$ is finite.
    Then $N = \gen Z$ where $Z$ is finite and
    \[
        \ell_X(Z) \le \ell_X(Y) + C_1 \ell_X(Y)^{\frac{\theta}{1-\theta}}.
    \]
    Here $C_1 = 2 (5C)^{1/(1-2\theta)}$ is a constant depending only on $(C, \theta)$.
\end{lemma}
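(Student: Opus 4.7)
The plan is to mimic the proof of \Cref{lem:soft-generalized-milnor} but to quantify every step using the growth bound \eqref{eq:theta-growth}. Write $L = \ell_X(Y)$ and define $Y_i$ and $H_i$ by \eqref{eq:Y_i-def}. As shown in the proof of \Cref{lem:soft-generalized-milnor}, the moment the ascending chain $(H_i)$ stabilizes (that is, $H_k = H_{k+1}$) the subgroup $H_k$ becomes $G$-invariant and hence equal to $N$. I would therefore take $Z = Y_k$ for the least $k$ with $H_k = H_{k+1}$. Every element of $Y_k$ has the form $y^g$ with $y \in Y$ and $\ell_X(g) \le k$, so $\ell_X(Z) \le L + 2k$, and the whole task reduces to bounding $k$.

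To bound $k$, pick $y_i \in Y_i \setminus H_{i-1}$ for each $1 \le i \le k$ (available by the minimality of $k$). Exactly as in the soft version, the $2^k$ products $y_1^{\epsilon_1}\cdots y_k^{\epsilon_k}$ with $\epsilon \in \{0,1\}^k$ are pairwise distinct, and each has $X$-length at most $\sum_{i=1}^k (2i + L) \le kL + 2k^2$. Combining the resulting lower bound $\gamma_X(kL + 2k^2) \ge 2^k$ with \eqref{eq:theta-growth} yields the key inequality
\[
    k \log 2 \;\le\; C(kL + 2k^2)^\theta.
\]

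Finally I would split according to which term dominates the right-hand side. If $k \le L/2$, then $kL + 2k^2 \le 2kL$, and rearranging gives $k \le (2^\theta C/\log 2)^{1/(1-\theta)} L^{\theta/(1-\theta)}$. If instead $k > L/2$, then $kL + 2k^2 \le 4k^2$, and rearranging (this is where $\theta < 1/2$ is needed, to keep the exponent $1/(1-2\theta)$ finite) gives $k \le (4^\theta C / \log 2)^{1/(1-2\theta)}$, a constant, which since $L \ge 1$ is also bounded by a multiple of $L^{\theta/(1-\theta)}$. The only remaining task is to verify that each of these two bounds on $k$ is at most $\tfrac12 C_1 L^{\theta/(1-\theta)}$ with the prescribed $C_1 = 2(5C)^{1/(1-2\theta)}$, which then gives $\ell_X(Z) \le L + 2k \le L + C_1 L^{\theta/(1-\theta)}$. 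This is the one place the exact constant matters; it amounts to the numerical inequalities $2^\theta/\log 2 \le 5$ and $4^\theta/\log 2 \le 5$, combined with the monotonicity $(5C)^{1/(1-\theta)} \le (5C)^{1/(1-2\theta)}$ (valid since $5C \ge 1$ and $\theta < 1/2$). I expect this bookkeeping to be the only real obstacle, and it is a purely numerical one; the structural content of the argument lives entirely in the soft version.
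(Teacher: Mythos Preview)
Your proposal is correct and follows essentially the same approach as the paper: define the chain $(H_i)$, use the $2^k$ distinct products to get $\gamma_X(kL+2k^2)\ge 2^k$, feed this into the growth bound, and split into two cases depending on whether $k$ or $L$ dominates (the paper splits at $k\ge L$ rather than $k>L/2$, but this is immaterial). The numerical bookkeeping you describe is exactly what the paper does, and the ``obviously assume $L\ge 1$'' step appears in both arguments.
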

\begin{proof}
    Define $Y_i$ and $H_i$ as in \eqref{eq:Y_i-def}.
    As before, $H_0 \le H_1 \le \cdots$ is an increasing chain of subgroups,
    and if $H_k = H_{k+1}$ then $H_k = N$.
    Moreover, $H_k = \gen{Y_k}$ where $\ell_X(Y_k) \le \ell_X(Y) + 2k$.
    Thus it suffices to prove that there is such an integer $k$ and to bound it.

    Suppose $H_{i-1} < H_i$ for all $i = 1, \dots, k$.
    For each such $i$ pick $y_i \in Y_i \setminus H_{i-1}$.
    As before, the $2^k$ elements $y_1^{\epsilon_1} \dots y_k^{\epsilon_k}$ with $\epsilon_i \in \{0,1\}$ are distinct,
    and it follows that
    \[
        \gamma_X(kL + 2k^2) \ge 2^k,
    \]
    where $L = \ell_X(Y)$.
    Now from the growth hypothesis \eqref{eq:theta-growth} we get
    \[
        k \le \log_2 \gamma_X(kL + 2k^2) \le 2C (kL + 2k^2)^\theta.
    \]
    If $L \le k$, the above inequality gives $k^{1-2\theta} \le 5 C$.
    Otherwise, it gives $k^{1-\theta} \le 5 C L^\theta$.
    Thus, since $\theta < 1/2$, and since we may obviously assume $L \ge 1$, either way we get $k \le (5C)^{1/(1-2\theta)} L^{\theta / (1-\theta)}$.
    This finishes the proof.
\end{proof}

\begin{corollary}
    \label[corollary]{cor:derived-series-generators}
    Let $G = \gen X$ be a finitely generated group satisfying \eqref{eq:theta-growth} for some $\theta < 1/2$.
    For each $k \ge 0$, $G^{(k)} = \gen {X_k}$ for some finite set $X_k$ such that
    \[
        \ell_X(X_k) \le C_2 4^k.
    \]
    Here $C_2$ is another constant depending only on $(C, \theta)$.
\end{corollary}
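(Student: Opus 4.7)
The plan is to construct the sets $X_k$ by induction on $k$, at each step converting normal-closure generation of $G^{(k+1)}$ into ordinary generation of short elements via \Cref{lem:quantitative-gen-Milnor}.

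For the base case, I would take $X_0 = X$, so $\ell_X(X_0) = 1$. For the inductive step, suppose $G^{(k)} = \gen{X_k}$ and set $Y_k = \{[x,y] : x, y \in X_k\}$, a finite set whose elements each have length at most $4 \ell_X(X_k)$. The subgroup $G^{(k+1)} = [G^{(k)}, G^{(k)}]$ is characteristic, hence normal, in $G$, and equals the normal closure $\gen{Y_k^G}$: the inclusion $\gen{Y_k^G} \le G^{(k+1)}$ is clear since $Y_k \subset G^{(k+1)}$, while conversely the normal closure of $Y_k$ already inside $G^{(k)}$ is all of $[G^{(k)}, G^{(k)}]$. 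Applying \Cref{lem:quantitative-gen-Milnor} with $Y = Y_k$ then produces a finite $X_{k+1}$ with $G^{(k+1)} = \gen{X_{k+1}}$ and
\[
    L_{k+1} \le 4 L_k + C_1 (4 L_k)^{\theta/(1-\theta)}, \qquad L_k := \ell_X(X_k).
\]

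Writing $\alpha = \theta/(1-\theta)$, the hypothesis $\theta < 1/2$ gives $\alpha < 1$, so the correction term is strictly sublinear in $L_k$ and the dominant growth is the factor of $4$. Setting $M_k = L_k / 4^k$, the recurrence rearranges to $M_{k+1} \le M_k + C_1 4^{\alpha-1} M_k^\alpha \cdot 4^{k(\alpha-1)}$, where $\sum_{k \ge 0} 4^{k(\alpha-1)}$ is a convergent geometric series. I would then choose $C_2$ large enough that $1 + C_1 4^{\alpha-1} C_2^\alpha / (1 - 4^{\alpha-1}) \le C_2$; this is possible because the left-hand side grows like $C_2^\alpha = o(C_2)$. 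A direct induction then yields $M_k \le C_2$ for all $k$, i.e., $L_k \le C_2 4^k$, with $C_2$ depending only on $(C, \theta)$.

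The only non-routine ingredient is the quantitative generalized Milnor lemma itself, which does the real work of relating normal generation to short ordinary generation; given that, the recurrence analysis is a standard geometric-sum estimate, and the main point to verify carefully is simply that $G^{(k+1)}$ really is the normal closure in $G$ of the commutators of elements of $X_k$, so that the lemma applies and its output generators can be fed back into the next induction step.
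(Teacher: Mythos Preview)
Your proposal is correct and follows essentially the same route as the paper: construct $X_k$ inductively by taking commutators of $X_{k-1}$, invoke \Cref{lem:quantitative-gen-Milnor} to convert normal generation of $G^{(k)}$ into ordinary generation by short elements, and then control the resulting recurrence via a geometric-series argument. The only cosmetic difference is that the paper telescopes multiplicatively (bounding $L_{k+1}/(4L_k)$ and using $1+x \le e^x$) while you telescope additively on $M_k = L_k/4^k$; both are routine and give a constant $C_2$ depending only on $(C,\theta)$.
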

\begin{proof}
    We use induction on $k$.
    To start we may take $X_0 = X$.
    If $k > 0$, by induction we have $G^{(k-1)} = \gen{X_{k-1}}$ for a suitable set $X_{k-1}$.
    Let
    \[
        Y_k = \{[x_1, x_2] : x_1, x_2 \in X_{k-1}\}.
    \]
    Since $G^{(k-1)} / \gen{Y_k^G}$ is abelian, $G^{(k)} = \gen{Y_k^G}$.
    Therefore by the lemma we have $G^{(k)} = \gen{X_k}$ for some finite set $X_k$ with
    \[
        \ell_X(X_k) \le 4 \ell_X(X_{k-1}) + 4 C_1 \ell_X(X_{k-1})^{\theta / (1-\theta)}.
    \]
    This shows that $\ell_X(X_k) \le L_k$, where the sequence $(L_k)$ is defined by $L_0 = 1$ and
    \[
        L_{k+1} = 4 L_k + 4 C_1 L_k^\delta \qquad (k \ge 1),
    \]
    where $\delta = \theta / (1-\theta) < 1$.
    Clearly $L_k \ge 4^k$ for all $k$. Therefore also
    \[
        \frac{L_{k+1}}{4 L_k}
        \le 1 + C_1 L_k^{-1 + \delta}
        \le 1 + C_1 4^{- (1-\delta) k}
        \le \exp(C_1 4^{-(1-\delta) k}),
    \]
    and it follows that
    \[
        \frac{L_k}{4^k} = \prod_{j=0}^{k-1} \frac{L_{j+1}}{4 L_j} \le \exp\br{C_1 \sum_{j=0}^\infty 4^{-(1-\delta)j}},
    \]
    and this proves the claim.
\end{proof}

By combining the previous corollary with the result of the previous section we can easily obtain $\Gap(1/4.16)$ for residually soluble groups, which is only slightly weaker than our main result \Cref{th-main}.
In the next section we will embellish the method to obtain $\Gap(1/4 - \epsilon)$, as claimed in \Cref{th-main}.

\begin{proposition}
    \label[proposition]{prop:4.16}
    $\Gap(1/4.16)$ holds for residually soluble groups.
\end{proposition}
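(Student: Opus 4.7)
The plan is to assume that $G = \gen{X}$ is residually soluble with $\gamma_X(n) \le \exp(C n^\theta)$ for some $\theta < 1/4.16$, and derive that $G$ is virtually nilpotent. If $G$ is residually $\FS(n)$ for some $n$, then \Cref{cor:gap-FS(k)} already gives the stronger conclusion (in fact growth $\succeq \exp(n^{1/2})$), so I would assume the opposite: for each large $m$, $G$ admits a finite soluble quotient $H = G/N$ with a self-centralizing chief factor $V/N \cong \F_p^{m'}$ of rank $m' \ge m$ (for some prime $p$ possibly depending on $m$), and aim for a contradiction as $m \to \infty$.

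The structural heart of the argument is the identification $F(H) = V/N$, where $F(H)$ is the Fitting subgroup. Since $V/N$ is abelian and normal one has $V/N \le F(H)$; conversely, the standard fact that every nontrivial normal subgroup of a nilpotent group meets its center, applied to $V/N \nsgp F(H)$, forces $V/N \cap Z(F(H)) \ne 1$, so by minimality $V/N \le Z(F(H))$, whence $F(H) \le C_H(V/N) = V/N$. Because $V/N$ is self-centralizing in $H$, the quotient $H/(V/N)$ embeds into $\GL_{m'}(\F_p)$; by Zassenhaus--Newman (\Cref{thm:zassenhaus-newman}) its derived length is at most $\rho(m')$, so the derived length $d$ of $H$ satisfies $d \le \rho(m') + 1$. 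Then $H^{(d-1)}$ is a nontrivial abelian normal subgroup of $H$, so it sits inside $F(H) = V/N$; and being a normal subgroup contained in the minimal normal $V/N$, it must equal $V/N$ on the nose.

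Now \Cref{cor:derived-series-generators} supplies a generating set $X_{d-1}$ for $G^{(d-1)}$ with $\ell_X(X_{d-1}) \le C_2 4^{d-1} \le C_2 4^{\rho(m')}$, and its images in $V/N$ generate $V/N$ as an $\F_p$-vector space. Picking $m'$ elements $v_1, \dots, v_{m'} \in X_{d-1}$ whose images form an $\F_p$-basis, the $2^{m'}$ products $v_1^{\epsilon_1} \cdots v_{m'}^{\epsilon_{m'}}$ with $\epsilon_i \in \{0,1\}$ have pairwise distinct images in $V/N$ (distinct subset sums of a basis), hence are distinct in $G$, and each has $X$-length at most $m' C_2 4^{\rho(m')}$. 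Thus $\gamma_X(m' C_2 4^{\rho(m')}) \ge 2^{m'}$. Using Newman's bound $4^{\rho(m')} \le 4^6 (m')^{5 \log_9 4}$ and the growth hypothesis, a direct calculation reduces this to $(m')^{1 - \theta(1 + 5 \log_9 4)} \le \text{const}$. Since $1 + 5 \log_9 4 < 4.16$ and $\theta < 1/4.16$, the exponent on the left is strictly positive, and letting $m' \to \infty$ yields the contradiction.

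The main obstacle is the structural observation $F(H) = V/N$: it is exactly what enables the clean $\{0,1\}$-exponent counting with no $p$-dependence, and it sidesteps the case analysis ``did the image of $G^{(\rho(m'))}$ in $H$ already vanish?'' that would otherwise arise. Once this identification is in hand, the rest of the argument is bookkeeping with Newman's explicit version of the Zassenhaus bound.
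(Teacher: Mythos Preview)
Your proposal is correct and follows essentially the same route as the paper: reduce via \Cref{cor:gap-FS(k)} to the case of unbounded self-centralizing chief rank, identify the large self-centralizing minimal normal $V$ with the last nontrivial derived term of the finite quotient, bound the derived length by Newman's \Cref{thm:zassenhaus-newman}\ref{ZNb}, feed this into \Cref{cor:derived-series-generators}, and count $2^{m'}$ distinct short products. Your Fitting-subgroup argument for $F(H)=V/N$ is in fact an explicit justification of what the paper asserts in one line (``$V$ must coincide with the last nontrivial term of the derived series'').

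Two small points. First, you silently pass from ``$G$ is residually soluble and not residually $\FS(n)$'' to ``$G$ has \emph{finite} soluble quotients with large self-centralizing chief factors''; the paper inserts a step here, observing (via \Cref{cor:derived-series-generators}) that each $G/G^{(k)}$ is polycyclic, hence $G$ is residually finite-soluble, which is what forces the witnessing quotients to be finite. Second, assuming $\theta<1/4.16$ literally proves only $\Gap(\beta)$ for $\beta<1/4.16$; but your inequality $(m')^{1-\theta(1+5\log_9 4)}\le\text{const}$ still contradicts $m'\to\infty$ when $\theta=1/4.16$, since $1+5\log_9 4\approx 4.155<4.16$, so the argument does yield $\Gap(1/4.16)$ as stated.
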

\begin{proof}
    Suppose that $G$ is a non-virtually-nilpotent residually soluble group and $X$ is a finite generating set for $G$ such that $\gamma_X(n) \preceq \exp(n^\beta)$, where $\beta = 1/4.16$.
    Then in particular $X$ satisfies \eqref{eq:theta-growth} with $\theta = 1/3$ (say) and some constant $C$.
    By \Cref{cor:derived-series-generators}, $G^{(k)} = \gen{X_k}$ where $X_k$ is a finite set such that
    \[
        \ell_X(X_k) \le C_2 4^k.
    \]
    In particular $G$ is residually polycyclic.
    Since polycyclic groups are residually finite (see \cite{mann-book}*{Theorem~2.24}), $G$ is residually finite-soluble.

    On the other hand, \Cref{cor:gap-FS(k)} implies that $G$ is not residually $\FS(n)$,
    so there are arbitrarily large integers $n$ such that $G$ has a finite soluble quotient $\bar G = G / N$ with a self-centralizing minimal normal subgroup $V = M / N$ of rank $n$, say $V \cong \F_p^n$.
    Since $V$ is minimal normal and self-centralizing, $V$ must coincide with the last nontrivial term of the derived series of $\bar G$.
    Thus
    \[
        V = \bar G^{(k)},\qquad\text{where}~k = \delta(\bar G / V).
    \]
    Since $\bar G / V$ embeds in $\GL_n(p)$, \Cref{thm:zassenhaus-newman}\ref{ZNb} gives
    \[
        k \le \rho(n) < 5 \log_9(n) + 6.
    \]

    Since $G^{(k)} = \gen {X_k}$, we can find $n$ elements $b_1, \dots, b_n \in X_k$ whose images in $V$ are linearly independent.
    It follows that the $2^n$ elements
    \[
        b_1^{\epsilon_1} \cdots b_n^{\epsilon_n} \qquad (\epsilon_1, \dots, \epsilon_n \in \{0,1\})
    \]
    are distinct, and
    \[
        \ell_X(b_1^{\epsilon_1} \cdots b_n^{\epsilon_n})
        \le C_2 4^k n
        \le C_2 4^{5 \log_9(n) + 6} n
        \le C_3 n^{5 \log_9(4) + 1} < C_3 n^{4.155}.
    \]
    Therefore
    \[
        \gamma_X(C_3 n^{4.155}) \ge 2^n.
    \]
    Thus there is a constant $c > 0$ and infinitely many $n$ such that
    \[
        \gamma_X(n) \ge \exp(c n^{1/4.155}),
    \]
    and this is a contradiction to the hypothesis $\gamma_X(n) \preceq \exp(n^\beta)$.
\end{proof}

\section{Modified derived length}

To improve the exponent $1/4.16$ we will have to be creative,
since Newman's bound on the derived length $\delta(G)$ of soluble linear groups of degree $n$ is sharp.
Our idea is to relax the notion of derived length.
Rather than looking for elements of small length in the terms of the derived series, we will look for elements in the terms of another kind of subnormal series.
We require that each factor in the series, if not abelian, is nilpotent of class $2$.

The motivation for this idea is the observation that, instead of applying \Cref{lem:quantitative-gen-Milnor} to the derived subgroup $N'$ of a given finitely generated normal subgroup $N = \gen X \nsgp G$, which is normally generated by the set of commutators
\[
    Y = \{[x_1, x_2] : x_1, x_2 \in X\},
\]
we can equally apply it to $\gamma_3(N)$, which is normally generated by the set of triple commutators
\[
    Y = \{[x_1, x_2, x_3] : x_1, x_2, x_3 \in X\}.
\]
The word $[x_1, x_2, x_3]$ has length $10$, and $10 < 4^2$, so there is a small saving available whenever there is a class-$2$ nilpotent section.

Let $G$ be a soluble group.
Consider a subnormal series
\[
    \hspace{2 cm}G=H_0 \ge H_1 \ge H_2 \ge \dots \ge H_k=1 \hspace{2 cm} (\mathcal{S})
\]
such that $H_i \nsgp H_{i-1}$ for each $i$ and such that each factor $H_{i-1} / H_i$ is either abelian or nilpotent of class $2$.
We say that $\mathcal{S}$ is a \emph{modified soluble series} of $G$.
Define
\[
    A(\mathcal{S})= \{i\in\{1,\dots,k\}: H_{i-1}/H_{i}~\text{is abelian} \}
\]
and
\[
    \mu_S= \sum_{i\in A(\mathcal{S})} 1 + \sum_{i\in\{1,\dots,k\}\setminus A(\mathcal{S})} \log_4(10).
\]
We refer to the invariant
\begin{equation*}
    \mu(G) = \min \{ \mu_\mathcal{S}: \mathcal{S} \text{ is a modified soluble series of }G \}
\end{equation*}
as the \textit{modified derived length} of $G$.
Trivially, $\mu(G) \le \delta(G)$, but it can be smaller.
This invariant $\mu$ behaves well with respect to subgroups, quotients, extensions, and direct powers.
However, unlike the derived length $\delta$, the modified derived length $\mu$ does \emph{not} generally satisfy
\[\mu(G_1 \times G_2) = \max(\mu(G_1), \mu(G_2))\]
(e.g., this fails for $G_1 = \SL_2(3) \cong 2 \cdot A_4$ and $G_2 = \F_3^2 \rtimes Q_8$),
and we will take some pains to avoid needing a bound of this kind.

\begin{lemma}
    Let $G$ be a soluble group.
    \begin{enumerate}
        \item If $H \le G$ then $\mu(H) \le \mu(G)$.
        \item If $N \nsgp G$ then $\mu(G/N) \le \mu(G)$.
        \item If $N \nsgp G$ then $\mu(G) \le \mu(N) + \mu(G/N)$.
        \item If $n \ge 1$ then $\mu(G^n) = \mu(G)$.
    \end{enumerate}
\end{lemma}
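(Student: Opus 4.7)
The plan is to derive each clause by transporting a given modified soluble series through the relevant construction, using that both the classes of \emph{abelian} and \emph{class-$2$ nilpotent} groups are closed under subgroups, quotients, and finite direct products; in each case the resulting series will be a modified soluble series of the target group whose $\mu$-weight is no larger than that of the original.

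For (1), intersect a modified series $\mathcal{S}: G = H_0 \ge \cdots \ge H_k = 1$ with $H$. Since $H_i \nsgp H_{i-1}$ implies $H \cap H_i \nsgp H \cap H_{i-1}$, the chain is subnormal, and the standard isomorphism exhibits $(H \cap H_{i-1})/(H \cap H_i)$ as a subgroup of $H_{i-1}/H_i$, so the abelian-or-class-$2$ property is inherited. For (2), the images $H_i N / N$ form a subnormal chain from $G/N$ down to $1$, with each factor isomorphic to $H_{i-1} N / H_i N$, which in turn is a quotient of $H_{i-1}/H_i$; the property passes to quotients. For (3), lift any modified series of $G/N$ to a chain $G = K_0 \ge \cdots \ge K_m = N$ (each $K_{i-1}/K_i$ is isomorphic to the corresponding factor in $G/N$ by the third isomorphism theorem) and concatenate with any modified series of $N$ from $N$ down to $1$; the result is a modified series of $G$ whose weight is the sum of the two original weights. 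Taking the infimum over each choice of series gives $\mu(G) \le \mu(N) + \mu(G/N)$.

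For (4), the $n$-th direct power of a modified series $\mathcal{S}$ of $G$, namely $G^n = H_0^n \ge \cdots \ge H_k^n = 1$, has factors $H_{i-1}^n / H_i^n \cong (H_{i-1}/H_i)^n$, which is abelian (resp.\ class-$2$ nilpotent) precisely when $H_{i-1}/H_i$ is. Hence this gives a modified series of $G^n$ with the same $A$-set and weight as $\mathcal{S}$, proving $\mu(G^n) \le \mu(G)$. The reverse inequality is immediate from (1) applied to the embedding of $G$ as a direct factor of $G^n$.

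There is no real obstacle here: all four parts are routine bookkeeping analogous to the standard proofs for derived length. The only mild subtlety is that a class-$2$ factor may degenerate to an abelian (or trivial) factor under intersection or quotient, but since the abelian weight $1$ is smaller than the class-$2$ weight $\log_4 10$, we may always recount the factor with the smaller weight, and the relevant inequality only gets better.
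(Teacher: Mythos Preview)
The paper states this lemma without proof, treating it as routine. Your argument is correct and is precisely the standard one: each clause follows by transporting a witnessing modified soluble series through the relevant construction, using that the classes ``abelian'' and ``nilpotent of class~$\le 2$'' are closed under subgroups, quotients, and finite direct powers, together with the observation that a class-$2$ factor degenerating to an abelian one only decreases $\mu_{\mathcal S}$ since $1 < \log_4 10$.
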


\begin{corollary}
    Let $A$ and $B$ be soluble groups. Suppose $B$ acts transitively on a set $\Omega$ and $W = A \wr_\Omega B = A^\Omega \rtimes B$. Then $\mu(W) \le \mu(A) + \mu(B)$.
\end{corollary}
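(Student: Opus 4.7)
The plan is a direct application of the properties of $\mu$ gathered in the preceding lemma. By the definition of the wreath product, $W = A \wr_\Omega B = A^\Omega \rtimes B$ contains the base group $A^\Omega$ as a normal subgroup with $W / A^\Omega \cong B$. Part (3) of the lemma then gives
\[
\mu(W) \le \mu(A^\Omega) + \mu(B),
\]
so everything reduces to showing $\mu(A^\Omega) \le \mu(A)$.

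For finite $\Omega$, which is the case of interest for the paper's applications to finite soluble groups, this is immediate from part (4) since $A^\Omega \cong A^{|\Omega|}$. For arbitrary $\Omega$ one can argue directly from the definition: given any modified soluble series $A = H_0 \ge H_1 \ge \dots \ge H_k = 1$ for $A$, the coordinatewise series $A^\Omega = H_0^\Omega \ge H_1^\Omega \ge \dots \ge H_k^\Omega = 1$ is a modified soluble series for $A^\Omega$ with exactly the same value of $\mu_\mathcal{S}$, since each factor $(H_{i-1}/H_i)^\Omega$ inherits the abelian or class-$2$ nilpotent property from $H_{i-1}/H_i$.

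There is no substantial obstacle: the argument is entirely mechanical and all of the content sits in the lemma. I would also note in passing that the transitivity assumption on the action of $B$ on $\Omega$ plays no role in establishing this upper bound; it is imposed for naturalness, since for an intransitive action the wreath product splits as a direct product of wreath products over the $B$-orbits, but the stated inequality remains valid regardless.
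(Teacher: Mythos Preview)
Your proof is correct and is precisely the argument the paper intends: the corollary is stated without proof because it follows immediately from parts (3) and (4) of the preceding lemma, exactly as you have written. Your additional remarks on infinite $\Omega$ and the irrelevance of transitivity for the upper bound are accurate but go beyond what the paper needs.
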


In the proof of \Cref{prop:4.16} we used Newman's bound \Cref{thm:zassenhaus-newman}\ref{ZNb} on the derived length of a soluble linear group.
We want to prove a stronger bound for the modified derived length.
This is plausible because Newman's bound is sharp only for certain groups (formed out of copies of $\AGL_2(3)$) that have many class-$2$ nilpotent sections in their derived series.

In order to prove such a bound, we will need Suprunenko's structure theorem for soluble primitive linear groups. This can be found in \cites{Suprunenko1, Suprunenko2}, and it is also stated in \cite{Wilson2011}*{Lemma 2.7}.

\begin{theorem}[Suprunenko]
\label[theorem]{thm:suprunenko}
    Let $G$ be a maximal primitive soluble subgroup of $\GL_n(p)$.
    Then $G$ has a unique maximal abelian normal subgroup $A$, which is cyclic of order $p^l - 1$ for some divisor $l$ of $n$.
    Moreover, $C = C_G(A)$ embeds in $\GL_r(p^l)$ where $r = n / l$ and $G/C$ is cyclic of order $l_1$ for some divisor $l_1$ of $l$.
    If $A=C$, then $l_1=l$.
    Otherwise, $r > 1$ and there is an integer $u>1$ that divides $r$ and all of whose prime divisors divide $p^l - 1$, such that $C$ has the following structure:
    \begin{enumerate}[(i)]
        \item $C/A$ has a unique maximal abelian normal subgroup $B/A$; the group $B/A$ has elementary abelian Sylow subgroups and order $u^2$;
        \item $C/B$ embeds in $\prod_{i=1}^s $ $ \Sp_{2 k_i}(q_i)$, where $u=\prod_{i=1}^s {q_i}^{k_i}$ is the prime factorization of $u$, and the projection of $C/B$ in each symplectic group is completely reducible.
    \end{enumerate}
\end{theorem}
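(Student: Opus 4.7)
The plan is to follow Suprunenko's analysis, which is a systematic application of Clifford theory together with the observation that, in a primitive linear group, the commutator bracket on the Fitting subgroup encodes a symplectic geometry. Throughout, primitivity of $G \le \GL_n(p)$ is exploited in the following way: any normal subgroup $N \nsgp G$ acts on $V = \F_p^n$ with homogeneous isotypic decomposition, but primitivity forbids any nontrivial block decomposition, so $N$ must act homogeneously. For an abelian such $N$, this forces $N$ to act via scalars in a field extension of $\F_p$ sitting inside $\opr{End}_G(V)$.

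First I would extract the cyclic subgroup $A$. Letting $A$ be a maximal abelian normal subgroup of $G$, the remark above gives an identification of $V$ as an $\F_{p^l}$-vector space of dimension $r = n/l$ (with $l \mid n$) on which $A$ acts by $\F_{p^l}$-scalars; since $A$ is maximal abelian normal, it must fill up the full multiplicative group, so $A$ is cyclic of order $p^l - 1$. The centralizer $C = C_G(A)$ inside $\GL_n(p)$ is exactly $\GL_r(p^l)$ (the $\F_{p^l}$-linear automorphisms of $V$), which gives the asserted embedding. Next, $G/C$ acts faithfully on $A$ by conjugation while preserving the $\F_p$-structure of $\F_{p^l}$ coming from $V$; this forces $G/C$ into $\opr{Gal}(\F_{p^l}/\F_p)$, so it is cyclic of order $l_1 \mid l$. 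Maximality of $G$ in $\GL_n(p)$ ensures that when $C = A$ one gets the full Galois group, i.e.\ $l_1 = l$.

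Now suppose $A \neq C$, so $r > 1$. I would analyze $C/A$ via its Fitting subgroup and let $B/A$ be a maximal abelian normal subgroup of $C/A$. The key step is to show that $B/A$ has elementary abelian Sylow subgroups and that the commutator map is a non-degenerate symplectic form. For $x, y \in B$, $[x, y]$ lies in $A$ (since $B/A$ is abelian) and in fact in a specific finite cyclic subgroup of $A$, giving a bimultiplicative alternating pairing $\omega \colon B/A \times B/A \to A$. Maximality of $A$ as an abelian normal subgroup of $C$ implies that $\omega$ has trivial radical on each Sylow subgroup of $B/A$: a nontrivial radical element would, together with $A$, generate a larger abelian normal subgroup. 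The same maximality, applied to a would-be element of order $q_i^2$ in the centre of a Sylow, forces each Sylow to be elementary abelian. Splitting $B/A = \prod_i (B/A)_{q_i}$ into Sylow subgroups, one concludes that each $(B/A)_{q_i}$ is an $\F_{q_i}$-vector space with a non-degenerate symplectic form, of dimension $2 k_i$; multiplicativity of the order gives $|B/A| = u^2$ with $u = \prod q_i^{k_i}$, and since the $q_i$ must divide $|A| = p^l - 1$, they also divide $p^l - 1$ and divide $r$ (the latter from a dimension count of how $B/A$ acts on $V$).

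The hard part — and the main obstacle — is the final statement that $C/B$ embeds in $\prod_i \Sp_{2k_i}(q_i)$ with each projection completely reducible. Since $C/B$ acts on $B/A$ by conjugation preserving $\omega$, the embedding into the product of symplectic groups is automatic from the preceding analysis. Complete reducibility of the projection to each factor requires more care: one interprets the action of $C/B$ on $(B/A)_{q_i}$ in terms of its restriction to (the image of) a Sylow $q_i$-subgroup of $C/B$ and applies Maschke/Clifford with an induction on the solvable length, using once more the maximality of $A$ and $B/A$ to rule out invariant flags that are not split. At this point the structural content of the theorem is fully assembled; the remaining work is essentially bookkeeping to match the statement exactly, and I would cite Suprunenko's original papers or \cite{Wilson2011}*{Lemma 2.7} for the details rather than redoing this bookkeeping here.
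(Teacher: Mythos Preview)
The paper does not prove this theorem at all: it is quoted as a black box, with the citation ``This can be found in \cites{Suprunenko1, Suprunenko2}, and it is also stated in \cite{Wilson2011}*{Lemma 2.7}.'' So there is no proof in the paper to compare your proposal against, and your final instinct --- to cite Suprunenko or Wilson for the details --- is exactly what the paper itself does.

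That said, your sketch follows the standard route (Clifford theory plus the symplectic commutator pairing on the extraspecial-type layer), and the broad strokes are right. A few places where the sketch is genuinely incomplete rather than merely terse: you never address the \emph{uniqueness} of $A$ (or of $B/A$), only existence; the sentence ``the centralizer $C = C_G(A)$ inside $\GL_n(p)$ is exactly $\GL_r(p^l)$'' conflates $C_G(A)$ with $C_{\GL_n(p)}(A)$, though you immediately correct course; and for the complete reducibility of each symplectic projection you wave at ``Maschke/Clifford with an induction on the solvable length'', which is not quite the mechanism --- the usual argument goes through maximality of $G$ (hence of $C$ inside $\GL_r(p^l)$) to force $B$ to be as large as possible, which in turn forces the normalizer action to be completely reducible. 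None of this matters for the paper, since the result is simply imported.
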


\begin{remark}
    $A = Z(C)$ and $B$ is nilpotent of class $2$.
\end{remark}

Essentially by applying Suprunenko's theorem, Newman obtained the following precise bound for the derived length of a completely reducible linear group.

\begin{theorem}[Newman]
    \label[theorem]{lem:newman-sigma}
    Let $G \le \GL_n(p)$ be soluble and completely reducible.
    Then $\delta(G) \le \sigma(n)$, where
    \[
        \sigma(n) =
        \begin{cases}
            1 &\text{if}~n=1,\\
            4 &\text{if}~n=2,\\
            5 &\text{if}~n \in [3,5],\\
            6 &\text{if}~n \in [6,7],\\
        \end{cases}
    \]
    and in general
    \[
        \sigma(n) \le 5 \log_9(n) + C
    \]
    where $C = 8 - 15 \log_9(2) \approx 3.268$.
\end{theorem}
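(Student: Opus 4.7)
The approach is a two-step reduction: first to the irreducible case, then to the primitive case, after which Suprunenko's theorem \Cref{thm:suprunenko} applies directly. Since $G$ is completely reducible on $V = \F_p^n$, decompose $V = V_1 \oplus \cdots \oplus V_s$ into irreducible $G$-submodules; restriction embeds $G$ into $\prod_i \GL(V_i)$, so $\delta(G) = \max_i \delta(G|_{V_i})$ and we may assume $G$ is irreducible. If $G$ is irreducible but imprimitive, refine a system of imprimitivity $V = W_1 \oplus \cdots \oplus W_k$ (with $\dim W_j = m$ and $n = km$) until the stabilizer of $W_1$ acts primitively on $W_1$; then $G \le H \wr T$, where $H \le \GL_m(p)$ is primitive irreducible and $T \le S_k$ is the transitive soluble permutation group induced on the blocks, and the standard wreath estimate gives $\delta(G) \le \delta(H) + \delta(T)$.

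For the primitive case, apply \Cref{thm:suprunenko} to a maximal primitive soluble overgroup of $G$ in $\GL_n(p)$. This produces a normal chain $1 \le A \le B \le C \le G$ with $A$ cyclic, $B/A$ abelian of order $u^2$ for some $u \le n$, $C/B$ completely reducible inside $\prod_i \Sp_{2 k_i}(q_i) \le \prod_i \GL_{2 k_i}(q_i)$ where $u = \prod_i q_i^{k_i}$, and $G/C$ cyclic. Hence
\[
    \delta(G) \le 3 + \max_i \sigma(2 k_i) \le 3 + \sigma(2 \log_2 n),
\]
using $u \ge 2^{\sum_i k_i}$. The primitive contribution is therefore only of iterated-logarithm size in $n$, so the full $5 \log_9 n$ rate must come from iterating the wreath step.

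For the permutation factor we need the parallel bound $\delta(T) \le 5 \log_9 k + O(1)$ for transitive soluble $T \le S_k$. This is proved by an analogous argument: a primitive soluble permutation group is affine, of the form $V_0 \rtimes G_0$ with $G_0 \le \GL(V_0)$ irreducible soluble, so the primitive permutation case reduces to $1 + \sigma(\dim V_0)$ and feeds back into the linear bound. The linear and permutation bounds are therefore established by a joint induction on $n$. The base cases $\sigma(1) = 1$ and $\sigma(2) = 4$ (realized by $\GL_2(3)$, with derived series $\GL_2(3) \supseteq \SL_2(3) \supseteq Q_8 \supseteq \{\pm 1\}$), and the other tabulated values for $n \le 7$, are settled by direct case analysis of low-degree primitive soluble groups. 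The extremal case at every iteration is the tower of wreath products of $\AGL_2(3)$ acting on $9^t$ points, which has derived length exactly $5t = 5 \log_9 (9^t)$; this is what forces the base $9$ in the final logarithm.

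The main obstacle is pinning down the precise constant $C = 8 - 15 \log_9 2$ rather than merely some constant. This requires careful bookkeeping at each recursion step to certify that any primitive soluble configuration other than the $\AGL_2(3)$-tower loses strictly in the ratio $\delta/\log(\mathrm{degree})$, and that the tabulated small-case values of $\sigma$ align with the linear extrapolation $5 \log_9 n + C$. Suprunenko's theorem constrains primitive behaviours tightly enough to make this bookkeeping tractable, but it is the main technical content of Newman's result.
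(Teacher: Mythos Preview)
The paper does not prove this statement: its proof reads in full ``See \cite{newman}*{Theorem~C$_S$ and Appendix}.'' So there is nothing substantive to compare against on the paper's side.

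Your outline is a faithful sketch of Newman's actual argument: reduce completely reducible to irreducible, then irreducible to primitive via a wreath decomposition $G \le H \wr T$, bound the primitive linear factor using Suprunenko's chain $A \le B \le C \le G$ (which contributes only $O(\sigma(\log n))$), and handle the transitive soluble permutation factor $T$ by the affine structure of primitive soluble permutation groups, feeding back into the linear bound. The joint induction you describe, with the $\AGL_2(3)$-tower as the extremal configuration forcing the constant $5/\log 9$, is exactly how Newman obtains the asymptotic $5\log_9 n + C$. Your remark that the precise constant $C = 8 - 15\log_9 2$ requires careful small-degree bookkeeping is also accurate: this is the content of Newman's Appendix, and it is genuinely where the work lies. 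The outline is correct as a plan; it is not a proof until those small cases and the recursive inequalities are written out in full, but that is acknowledged.
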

\begin{proof}
    See \cite{newman}*{Theorem~C$_S$ and Appendix}.
\end{proof}

We will prove a stronger bound for $\mu(G)$ when $G$ is soluble and irreducible, using a similar approach. To begin, we need to bound $\mu(G)$ and $\delta(G)$ in several special cases.

\begin{lemma}\label[lemma]{lem:small-cases}
    Let $G \le \GL_n(p)$ be soluble and irreducible.
    \begin{enumerate}
        \item\label{case-2} If $n=2$ then $\mu(G) \le 2 + \log_4(10)$.
        \item\label{case-3} If $n=3$ then $\mu(G)\le 1+2\log_4(10)$.
        \item\label{case-4} If $n=4$ then $\mu(G)\le 3 + \log_4(10)$.
    \end{enumerate}
    Furthermore, if $p = 2$ then the following bounds hold.
    \begin{enumerate}[start=4]
        \item\label{case-prime-2} If $n$ is prime then $\delta(G)\le 2$.
        \item\label{case-4-2} If $n = 4$ then $\delta(G)\le 3$.
        \item\label{case-6-2} If $n = 6$ then $\delta(G) \le 6$ and $\mu(G)\le 2 + 2\log_4(10)$.
        \item\label{case-8-2} If $n = 8$ then $\delta(G)\le 5$.
        \item\label{case-9-2} If $n = 9$ then $\delta(G) \le 5$.
        \item\label{case-10-2} If $n = 10$ then $\delta(G) \le 4$.
    \end{enumerate}
\end{lemma}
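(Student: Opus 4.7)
The plan is to handle each case via the primitive/imprimitive dichotomy for irreducible soluble linear groups. In the primitive case we apply Suprunenko's theorem (\Cref{thm:suprunenko}); in the imprimitive case we reduce to earlier cases of this lemma via the wreath-product bound for $\mu$ (and similarly for $\delta$).

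If $G \le \GL_n(p)$ is imprimitive with a block system of $m > 1$ blocks of size $n/m$, then $G$ embeds in $H \wr K$ where $H \le \GL_{n/m}(p)$ is irreducible soluble and $K \le S_m$ is transitive soluble. The wreath-product corollary gives $\mu(G) \le \mu(H) + \mu(K)$, and a direct argument on derived lengths gives $\delta(G) \le \delta(H) + \delta(K)$. We bound $\mu(H)$ or $\delta(H)$ using earlier cases of the present lemma, and $\mu(K)$, $\delta(K)$ by explicit analysis of small transitive soluble permutation groups: $\mu(S_4) \le 3$ via $1 \nsgp V_4 \nsgp A_4 \nsgp S_4$; $\mu(S_3 \wr S_2) \le 3$; $\delta(\AGL_2(3)) \le 5$; and so on.

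If $G$ is primitive, we embed it in a maximal primitive soluble subgroup and apply Suprunenko to obtain the series $1 \le A \le B \le C \le G$, with $A$ cyclic of order $p^l - 1$, $B/A$ elementary abelian of order $u^2$ (so $B$ is nilpotent of class $\le 2$), $C/B \hookrightarrow \prod_i \Sp_{2k_i}(q_i)$, and $G/C$ cyclic of order dividing $l$. Treating $B$ as a single class-$2$ step and summing over this modified series gives
\[
    \mu(G) \le \log_4(10) + \mu(C/B) + \mu(G/C),
\]
with $\mu(G/C) = 0$ when $l = 1$ (so $C = G$) and $\mu(G/C) \le 1$ otherwise; similarly $\delta(G) \le 3 + \delta(C/B)$. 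The analysis splits by the divisor $l$ of $n$ and by $u \mid r = n/l$, subject to the Suprunenko constraint that every prime divisor of $u$ divides $p^l - 1$. When $u = 1$ (so $A = C$) we get $\mu(G) \le 2$ immediately. For $p = 2$ the constraint rules out $l = 1$ with $u > 1$, so every primitive $G \le \GL_n(2)$ has $l \ge 2$; for $n$ prime this forces $l = n$ and $\delta(G) \le 2$, which settles case~(4) and, by similar divisor arithmetic, the primitive halves of cases~(5)--(9).

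The nontrivial primitive subcases are therefore: cases~(1)--(3) with $l = 1$, $p$ odd, where $G = C$ and $C/B$ is a completely reducible soluble subgroup of $\Sp_2(2) \cong S_3$, $\Sp_2(3) \cong \SL_2(3)$, or $\Sp_4(2) \cong S_6$; and case~(6) with $n = 6$, $p = 2$, $l = 2$, $u = 3$, where again $C/B \le \SL_2(3)$. The required bounds are $\mu(S_3) \le 2$; $\mu(\SL_2(3)) \le 1 + \log_4(10)$ via $1 \nsgp Q_8 \nsgp \SL_2(3)$ with $Q_8$ class-$2$ nilpotent; $\delta(\SL_2(3)) = 3$; and the maximal soluble subgroups of $S_6$, namely $S_3 \wr S_2$ and $S_4 \times S_2$, both have $\mu \le 3$. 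Plugging these in yields the stated bounds precisely. The main obstacle is bookkeeping: for each $(n,p)$ one must enumerate the admissible $(l,u)$, identify the maximal soluble subgroups of the symplectic quotient, pick the most efficient modified series (typically discounting $B$ as a class-$2$ step rather than splitting $A$ and $B/A$), and verify that all imprimitive subcases fit as well. No machinery beyond Suprunenko's theorem and the elementary properties of $\mu$ already established is required.
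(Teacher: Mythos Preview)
Your approach is essentially identical to the paper's: split each case into primitive and imprimitive, invoke Suprunenko's series $A \le B \le C \le G$ in the primitive case (counting $B$ as one class-$2$ step), reduce via wreath products in the imprimitive case, and cross-reference earlier cases of the lemma throughout. Two minor oversights worth flagging: for $n=4$ primitive you only list $l=1$, but $l=2$, $u=2$ (odd $p$) is also admissible --- then $G/C$ is cyclic of order $\le 2$ and $C/B \le \Sp_2(2)$, again giving $\mu(G) \le 3 + \log_4(10)$, which the paper absorbs into its ``$u=2$'' subcase without fixing $l$; and for the $u=4$ subcase the paper treats $C/B \le \Sp_4(2)$ as a completely reducible subgroup of $\GL_4(2)$ and appeals to cases~\ref{case-prime-2} and~\ref{case-4-2} of this same lemma to get $\delta(C/B) \le 3$, rather than enumerating maximal soluble subgroups of $S_6$ (your list omits $S_2 \wr S_3$, though it too has $\delta = 3$, so the conclusion is unaffected).
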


\begin{proof}
    Without loss of generality $G$ is a maximal soluble irreducible subgroup of $\GL_n(p)$. If $G$ is primitive then \Cref{thm:suprunenko} applies and we use the notation from that theorem.
    In this case we will use the following observation several times: if there does not exist a factorization $n = rl$ such that $\gcd(r, p^l - 1) > 1$, for example if $\gcd(n, p^n - 1) = 1$, then necessarily $A = C$ and $\delta(G) \le 2$.

    \ref{case-2}
    Assume $n = 2$.

    \emph{Imprimitive case.}
    $G = \GL_1(p) \wr S_2$, so $\delta(G)\le 2$.

    \emph{Primitive case.}
    If $A=C$ then $\delta(G)\le 2$.
    If $A\neq C$, then $G=C$, $u=2$, and $C/B\le \GL_2(2) \cong S_3$.
    Since $B$ is nilpotent of class $2$ we have \[\mu(G)\le \mu(C/B)+\mu(B)\le 2+\log_4(10).\]

    \ref{case-3}
    Assume $n = 3$.

    \emph{Imprimitive case.} $G = \GL_1(p) \wr S_3$, $\delta(G)\le 3$.

    \emph{Primitive case.}
    If $A = C$ then $\delta(G)\le 2$.
    If $A\neq C$ then $l=1$, $u=3$, $G = C$, and $C/B\le \SL_2(3) \cong 2 \cdot A_4$,
    so $\mu(C / B) \le 1 + \log_4(10)$ and $\delta(C / B) \le 3$,
    which gives $\mu(G) \le 1 + 2 \log_4(10)$.

    We defer the proof of \ref{case-4} until after the proof of \ref{case-prime-2} and \ref{case-4-2}.

    \ref{case-prime-2}
    Assume $n$ is prime and $p = 2$.

    \emph{Imprimitive case.} $G$ is isomorphic to a soluble transitive subgroup of $S_n$, and hence $G \cong \AGL_1(n)$, so $\delta(G) \le 2$.

    \emph{Primitive case.} $A \cong C_{2^l - 1}$ for some $l$, and since $A$ cannot be trivial we must have $l > 1$, so $l = n$.
    Thus $A = C$ and $\delta(G)\le 2$.

    \ref{case-4-2}
    Assume $(n, p) = (4, 2)$.

    \emph{Imprimitive case.} Either $G \cong S_4$ or $G \cong \GL_2(2) \wr S_2 \cong S_3 \wr S_2$, so $\delta(G) \le 3$.

    \emph{Primitive case.}
    Since $\gcd(4, 2^4 - 1) = 1$ we must have $A = C$, so $\delta(G) \le 2$.

    \ref{case-4}
    Assume $n = 4$.

    \emph{Imprimitive case.} $G \le \GL_d(p) \wr \Sym(4/d)$ for $d \in \{1, 2\}$.
    If $d = 1$ then $G = \GL_1(p) \wr S_4$ and $\delta(G) \le 4$.
    If $d = 2$ then $G = L \wr S_2$ for some soluble irreducible $L \le \GL_2(p)$, so $\mu(G) \le 3 + \log_4(10)$ by \ref{case-2}.

    \emph{Primitive case.}
    If $A = C$ then $\delta(G) \le 2$.
    Otherwise, $r > 1$ and $u \in \{2, 4\}$.
    If $u = 2$ then $C / B \le \Sp_2(2) \cong S_3$.
    Since $G / C$ is cyclic and $B$ is nilpotent of class $2$ it follows that $\mu(G) \le 3 + \log_4(10)$.
    If $u = 4$ then $l = 1$, $G = C$, and $C / B \le \Sp_4(2)$.
    If $C / B$ is reducible then $\delta(C/B)\le 2$ by \ref{case-prime-2}.
    If $C / B$ is irreducible then $\delta(C / B) \le 3$ by \ref{case-4-2}.
    Thus $\mu(G) \le 3 + \log_4(10)$ in all cases.

    \ref{case-6-2}
    Assume $(n, p) = (6, 2)$. The bound on $\delta(G)$ follows from \Cref{lem:newman-sigma}, so we just need to prove the bound on $\mu(G)$.

    \emph{Imprimitive case.} $G \le \GL_d(2) \wr \Sym(6/d)$ for some $d \in \{1, 2, 3\}$.
    If $d = 1$ then $G \le S_6$ and $G \cong S_2 \wr S_3$ or $G \cong S_3 \wr S_2$, so $\delta(G) = 3$.
    If $d = 2$ then $G = \GL_2(2) \wr S_3 \cong S_3 \wr S_3$, so $\delta(G) = 4$.
    If $d = 3$ then $G = L \wr S_2$ for some soluble irreducible subgroup $L \le \GL_3(2)$, and by \ref{case-prime-2} we have $\delta(G) \le 3$.

    \emph{Primitive case.}
    If $A = C$ then $\delta(G) \le 2$.
    Otherwise, $u \mid 6$ and the prime divisors of $u$ divide $2^l-1$, so $u=3$ and $C/B\le \SL_2(3) \cong 2 \cdot A_4$, so $\mu(C / B) \le 1 + \log_4(10)$.
    Since $B$ is nilpotent of class $2$ we get
    \[\mu(G)\le \mu(G/C)+\mu(C/B)+\mu(B)\le 2+2\log_4(10).\]

    \ref{case-8-2}
    Assume $(n, p) = (8, 2)$.

    \emph{Imprimitive case.} $G \le \GL_d(2) \wr \Sym(8/d)$ for some $d \in \{1, 2, 4\}$.
    If $d = 1$ then $G \le S_8$ and $G$ is either $S_4 \wr S_2$, $S_2 \wr S_4$, or else $G \le \AGL_3(2)$.
    In the last case $G \cong \F_2^3 \rtimes L$ for an irreducible soluble subgroup $L \le \GL_3(2)$, and $\delta(L) \le 2$ by \ref{case-prime-2}.
    Thus $\delta(G) \le 4$ in these cases.
    If $d = 2$ then $G = \GL_2(2) \wr S_4 \cong S_3 \wr S_4$ and $\delta(G) = 5$.
    If $d = 4$ then $G = L \wr S_2$ for some soluble irreducible $L \le \GL_4(2)$, and $\delta(L) \le 3$ by \ref{case-4-2}, so $\delta(G) \le 4$.

    \emph{Primitive case.} Since $\gcd(8, 2^8 - 1) = 1$ we must have $A = C$ and $\delta(G) \le 2$.

    \ref{case-9-2}
    Assume $(n, p) = (9, 2)$.

    \emph{Imprimitive case.} $G \le \GL_d(2) \wr \Sym(9/d)$ for some $d \in \{1, 3\}$.
    If $d = 1$ then $G \le S_9$ and $G$ is either $S_3 \wr S_3$ or $\AGL_2(3)$, so $\delta(G) \le 5$.

    \emph{Primitive case.} Since $\gcd(9, 2^9-1) = 1$ we must have $A = C$ and $\delta(G) \le 2$.

    \ref{case-10-2}
    Assume $(n, p) = (10, 2)$.

    \emph{Imprimitive case.}
    $G \le \GL_d(2) \wr \Sym(10/d)$ for some $d \in \{1, 2, 5\}$.
    If $d = 1$ then $G \le S_{10}$ and $G \cong \AGL_1(5) \wr S_2$ or $G \cong S_2 \wr \AGL_1(5)$, so $\delta(G) = 3$.
    If $d = 2$ then $G = \GL_2(2) \wr \AGL_1(5) \cong S_3 \wr \AGL_1(5)$ and $\delta(G) = 4$.
    If $d = 5$ then $G = L \wr S_2$ for some irreducible $L \le \GL_5(2)$ and $\delta(L) \le 2$ by \ref{case-prime-2}, so $\delta(G) \le 3$.

    \emph{Primitive case.} Since $\gcd(10, 2^{10} - 1) = 1$ we must have $A = C$ and $\delta(G) \le 2$.
\end{proof}

\begin{theorem} \label[theorem]{prop-modified derived length}
    Let $n \ge 1$.
    \begin{enumerate}[(i)]
        \item If $G$ is a transitive soluble subgroup of $S_n$ then
        \[\mu(G) \le 3 \log_4(n).\]
        \item If $G$ is an irreducible soluble subgroup of $\GL_n(p)$ then \[\mu(G) \le 3\log_4(n) + K,\]
        where $K = -5/2 + 3 \log_4(10) \approx 2.48$.
    \end{enumerate}
\end{theorem}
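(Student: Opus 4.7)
The plan is to prove (i) and (ii) by simultaneous strong induction on $n$, following the template of Newman's proof of \Cref{lem:newman-sigma} but substituting $\mu$ for $\delta$ throughout in order to harvest the $\log_4(10) < 2$ discount that the class-$2$ nilpotent subgroup $B$ appearing in Suprunenko's theorem offers. The small base cases are supplied by \Cref{lem:small-cases}.

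For (i), a transitive soluble $G \le S_n$ splits into three subcases. If $n$ is prime, then $G \le \AGL_1(n)$ has $\delta(G) \le 2$ and the bound is clear. If $G$ is imprimitive with $s$ blocks of size $m$, then $G \le H \wr L$ for some transitive soluble $H \le S_m$ and $L \le S_s$, and the wreath-product corollary combined with the inductive hypothesis gives $\mu(G) \le \mu(H) + \mu(L) \le 3\log_4(m) + 3\log_4(s) = 3\log_4(n)$. If $G$ is primitive (and $n$ composite), then $G = V \rtimes L$ with $V \cong \F_q^k$, $n = q^k$, and $L \le \GL_k(q)$ irreducible soluble, so by (ii) inductively $\mu(G) \le 1 + 3\log_4(k) + K$; this beats the target whenever $n/k \ge 4^{(K+1)/3} \approx 4.94$, and the exceptional affine degrees $n \in \{4, 8, 9, 16\}$ are finished using the explicit bounds in \Cref{lem:small-cases}.

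For (ii), the imprimitive case is analogous: $G \le H \wr L$ with $H \le \GL_m(p)$ irreducible soluble and $L \le S_s$ transitive soluble, $ms = n$, giving $\mu(G) \le (3\log_4(m) + K) + 3\log_4(s) = 3\log_4(n) + K$ by induction. In the primitive case I apply \Cref{thm:suprunenko}. When $A = C$ the group $G/A$ is cyclic, so $\delta(G) \le 2 \le K$. When $A \ne C$, the series $1 \nsgp B \nsgp C \nsgp G$ with $G/C$ cyclic, $B$ nilpotent of class $2$, and $C/B$ a sub-direct product of the symplectic groups $\Sp_{2k_i}(q_i)$ with completely reducible projections yields
\[
    \mu(G) \le \mu(G/C) + \mu(C/B) + \mu(B) \le 1 + \mu(C/B) + \log_4(10),
\]
reducing everything to bounding $\mu(C/B)$.

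The main obstacle is this last estimate, because $\mu$ is additive (rather than maximised) over direct factors while $C/B$ is only a sub-direct product. The strategy is to analyse each projection $\pi_i(C/B) \le \Sp_{2k_i}(q_i) \le \GL_{2k_i}(q_i)$ by decomposing the natural module into isotypic components, applying (ii) inductively to each irreducible summand, and collapsing multiplicities via the identity $\mu(H^t) = \mu(H)$ together with the wreath-product corollary of (i); as a cruder fallback one has $\mu(\pi_i(C/B)) \le \delta(\pi_i(C/B)) \le \sigma(2k_i)$ from \Cref{lem:newman-sigma}. Since $u = \prod_i q_i^{k_i}$ divides $r = n/l$, the quantity $2k_i \le 2\log_2 u \le 2\log_2 n$ makes the contribution of $\mu(C/B)$ essentially doubly logarithmic in $n$ and hence comfortably absorbed into the target $3\log_4 n + K$ for all sufficiently large $n$. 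The delicate point is that the saving $2 - \log_4(10) \approx 0.34$ contributed by $B$ at each Suprunenko step is exactly what upgrades Newman's $5\log_9$ coefficient to our sharper $3\log_4$ one; the constants $K$ and $\log_4(10)$ have in fact been calibrated so that equality is met in extremal configurations (for instance when $\pi_i(C/B)$ is an irreducible soluble subgroup of $\Sp_6(2)$), and the small exceptional instances where this margin would otherwise be too tight are precisely those tabulated in \Cref{lem:small-cases}.
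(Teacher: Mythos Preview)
Your overall architecture matches the paper's proof exactly: simultaneous induction on $n$, imprimitive cases handled by the wreath-product corollary, primitive permutation case reduced to (ii) in smaller degree, primitive linear case handled via Suprunenko's chain $1 \le B \le C \le G$, and finitely many small $n$ deferred to \Cref{lem:small-cases}.

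The one real gap is your treatment of $\mu(C/B)$. You correctly flag that $\mu$ need not take maxima over direct factors, but your proposed workaround (isotypic decompositions of each projection, collapsing multiplicities, summing contributions) is both unnecessary and not actually carried through: your assertion that the contribution is ``essentially doubly logarithmic in $n$'' presumes you may take the \emph{maximum} over the factors $\Sp_{2k_i}(q_i)$, which is exactly what you said you cannot do for $\mu$. If instead you sum $\sigma(2k_i)$ over $i$, the bound can be of order $\log n / \log\log n$ (e.g.\ when $u$ is squarefree with many prime factors), and verifying that this still fits under $3\log_4 n + K$ with the right constant would require a separate argument you have not supplied.

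The paper sidesteps this entirely by the trivial observation $\mu(C/B) \le \delta(C/B)$. Unlike $\mu$, the derived length $\delta$ \emph{does} satisfy $\delta(H) \le \max_i \delta(\pi_i(H))$ for any subdirect product, so Newman's \Cref{lem:newman-sigma} applied to each completely reducible projection gives $\delta(C/B) \le \sigma(2k)$ with $k = \max_i k_i$ immediately. One then checks that $1 + \sigma(2k) + \log_4(10) \le 3\log_4 n + K$ for all $n$ outside a short finite list ($n \le 6$ and $n \in \{8,9,16,32\}$), and those are dispatched individually using \Cref{lem:small-cases}. The constant $K$ is pinned down by the case $n = 8$ with $C/B$ irreducible in $\Sp_6(2)$.
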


\begin{proof}
    We prove both statements together by induction on $n$.
    More precisely, we will assume that both \emph{(i)} and \emph{(ii)} hold for all positive integers $m < n$,
    and additionally in the proof of \emph{(ii)} we will assume that \emph{(i)} holds for $m = n$.

    \emph{(i)}
    Let $G \le S_n$ be soluble and transitive. If $1 \le n \le 4$ then
    \[
        \mu(G) \le \mu(S_n) \le n-1 \le 3 \log_4(n),
    \]
    so we may assume $n \ge 5$.
    If $G$ is imprimitive then $G\le L \wr M$ for some soluble transitive subgroups $L\le \Sym(n/t)$ and $M\le \Sym(t)$ with $t \mid n$ and $1 < t < n$, so by induction
    \[
        \mu(G) \le \mu(L) + \mu(M) \le 3\log_4(n/t)+ 3\log_4(t) = 3\log_4(n).
    \]
    If $G$ is primitive then $n = p^d$ and $G \cong V \rtimes H$ with $V = \F_p^d$ and $H \le \GL_d(p)$ irreducible.
    Since $d \le \log_2(n) < n$, by induction
    \[
        \mu(G)\le 1 + 3\log_4(d) + K,
    \]
    and this gives $\mu(G) < 3 \log_4(n)$ unless $n \in \{8, 9, 16\}$.
    In these cases we refer to \Cref{lem:small-cases}.
    If $n \in \{8, 16\}$ then $H \le \GL_d(2)$ with $d \in \{3, 4\}$, and \Cref{lem:small-cases}\ref{case-prime-2}\ref{case-4-2} give $\delta(H) \le 3$, so
    \[
        \mu(G) \le \delta(G) \le  4 < 3 \log_4(n).
    \]
    If $n = 9$ then $H \le \GL_2(3)$, so
    \[
    \mu(G) \le 1 + \mu(\GL_2(3)) = 3 + \log_4(10) < 3 \log_4(n).
    \]

    \emph{(ii)}
    Let $p$ be a prime and let $G\le \GL_n(p)$ be soluble and irreducible.
    If $G$ is imprimitive then $G\le L \wr M$ for some soluble irreducible subgroup $L$ of $\GL_{n/t}(p)$ and some soluble transitive subgroup $M$ of $\Sym(t)$,
    where $t \mid n$ and $1 < t \le n$,
    so by induction we have
    \[
        \mu(G) \le \mu(L) + \mu(M) \le 3\log_4(n/t) + K + 3 \log_4(t) = 3 \log_4(n) + K.
    \]
    Thus we may assume $G$ is primitive, and without loss of generality we may take $G$ to be maximal primitive soluble and apply Suprunenko's~\Cref{thm:suprunenko}.
    If $A = C$ then $\mu(G) \le \delta(G) \le 2$, which is enough to conclude since $K \ge 2$, so we may assume $A \neq C$.
    Since $B$ is nilpotent of class $2$, $\mu(B) \le \log_4(10)$.
    Now $C / B \le \prod_{i=1}^s \Sp_{2k_i}(q_i)$ where $u = \prod_{i=1}^s q_i^{k_i} \mid n / l$,
    and the projection of $C / B$ to each factor is completely reducible,
    so $\delta(C / B) \le \sigma(2k)$ where $k = \max\{k_1, \dots, k_s\}$.
    Thus
    \[
        \mu(G) \le 1 + \delta(C / B) + \mu(B) \le \sigma(2k) + 1 + \log_4(10).
    \]
    We can check that this gives $\mu(G) \le 3 \log_4(n) + K$ unless $n \le 6$ or $n \in \{8, 9, 16, 32\}$. We need to treat these cases specially,
    referring to \Cref{lem:small-cases}.

    \emph{Case $n = 1$.}
    Trivial.

    \emph{Cases $n = 2, 3$.}
    See \Cref{lem:small-cases}\ref{case-2}\ref{case-3}.

    \emph{Cases $n = 4, 5, 6$.}
    The bound $\mu(G) \le \delta(G) \le \sigma(n)$ from Newman's \Cref{lem:newman-sigma} is sufficient.

    \emph{Case $n = 8 = 2^3$.}
    If $l > 1$ then $C / B \le \Sp_4(2)$ and by \Cref{lem:small-cases}\ref{case-prime-2}\ref{case-4-2} we have $\delta(C / B) \le 3$, so
    \[
        \mu(G) \le 1 + \delta(C / B) + \mu(B) \le 4 + \log_4(10)
        < 3 \log_4(n) + K.
    \]
    If $l = 1$ then $G = C$ and $C / B \le \Sp_6(2)$.
    If $C / B$ is reducible then by \Cref{lem:small-cases}\ref{case-prime-2}\ref{case-4-2} we have $\mu(C/B) \le \delta(C / B) \le 3$.
    If $C / B$ is irreducible then by \Cref{lem:small-cases}\ref{case-6-2} we have $\mu(C / B) \le 2 + 2 \log_4(10)$.
    Thus in all cases
    \[
        \mu(G) \le \mu(C / B) + \mu(B) \le 2 + 3 \log_4(10) = 3 \log_4(n) + K.
    \]
    \emph{Remark.} This case is the source of the constant $K$.

    \emph{Case $n = 9 = 3^2$.}
    If $l > 1$ then $C / B \le \Sp_2(3) \cong 2 \cdot A_4$, so $\mu(C / B) \le 1 + \log_4(10)$ and
    \[
        \mu(G) \le 1 + \mu(C / B) + \mu(B) \le 2 + 2 \log_4(10)
        < 3 \log_4(n) + K.
    \]
    If $l = 1$ then $C / B \le \Sp_4(3)$, so by \Cref{lem:newman-sigma} we have $\mu(C / B) \le \sigma(4) = 5$ and
    \[
        \mu(G) \le \mu(C / B) + \mu(B) \le 5 + \log_4(10)
        < 3 \log_4(n) + K.
    \]

    \emph{Case $n = 16 = 2^4$.}
    If $l = 1$ then $G = C$ and $C / B \le \Sp_8(2)$.
    By \Cref{lem:small-cases}\ref{case-prime-2}--\ref{case-8-2} we have $\delta(C / B) \le 6$, so
    \[
        \mu(G) \le \delta(C / B) + \mu(B) \le 6 + \log_4(10) < 3 \log_4(n) + K.
    \]
    If $l > 1$ then $C / B \le \Sp_6(2)$.
    If $C / B$ is reducible then similarly $\delta(C / B) \le 3$ and
    \[
        \mu(G) \le 1 + \delta(C / B) + \mu(B) \le 4 + \log_4(10) < 3 \log_4(n) + K.
    \]
    Finally, if $C / B$ is an irreducible subgroup of $\Sp_6(2)$ then by \Cref{lem:small-cases}\ref{case-6-2} we have
    \[
        \mu(G) \le 1 + \mu(C / B) + \mu(B) \le 3 + 3\log_4(10) < 3 \log_4(n) + K.
    \]

    \emph{Case $n = 32 = 2^5$.}
    In this final case $C / B$ is a completely reducible subgroup of $\Sp_{10}(2)$, so $\delta(C / B) \le 6$ by \Cref{lem:small-cases}\ref{case-prime-2}--\ref{case-10-2} and
    \[
        \mu(G) \le 1 + \delta(C / B) + \mu(B) \le 7 + \log_4(10)
        < 3 \log_4(n) + K.
    \]
    This completes the proof.
\end{proof}

Finally we can prove \Cref{th-main}.

\begin{proof}[Proof of \Cref{th-main}]
    The proof is mostly the same as the proof of \Cref{prop:4.16}.
    Suppose that $G$ is a non-virtually-nilpotent residually soluble group and $X$ is a finite generating set for $G$ such that $\log \gamma_X(n) / n^{1/4} \to 0$ as $n \to \infty$.
    Then in particular $G$ satisfies \eqref{eq:theta-growth} with $\theta = 1/3$.
    Exactly as in the proof of \Cref{prop:4.16}, there are arbitrarily large integers $n$ such that $G$ has a finite soluble quotient $\bar G = G / N$ with a self-centralizing minimal normal subgroup $V = M / N \cong \F_p^n$.

    Let
    \begin{equation*}
    \hspace{2 cm}\overline{G}=H_0 \ge H_1 \ge H_2 \ge \dots \ge H_k=1 \hspace{2 cm} (\mathcal{S})
\end{equation*}
    be a modified soluble series of $\bar G$ such that $\mu_{\mathcal{S}}=\mu(\bar G)$.
    Since $\bar G / V$ embeds in $\GL_n(p)$, \Cref{prop-modified derived length} gives $\mu(\bar G) \le 3 \log_4(n) + 3$.
    If $A(\mathcal{S})=\{i\in\{1,\dots,k\}: H_{i-1}/H_{i} \text{ is abelian}\}$, we may assume that
    \begin{equation*}
        H_{i}= \begin{cases}
            (H_{i-1})' &\text{if}~i\in A(\mathcal{S}), \\
            \gamma_3(H_{i-1}) &\text{otherwise}.
        \end{cases}
    \end{equation*}
    Then $H_{k-1}$ is a nontrivial normal subgroup of $\overline{G}$ that is either abelian or nilpotent of class $2$. Since $V$ is minimal normal and self-centralizing, we must have $V = H_{k-1}$.
    Now if $H_{i-1}$ is generated by a set $X_{i-1}$ then $H_i$ is normally generated by a set $Y_i$ with $\ell_X(Y_i) \le a_i \ell(X_{i-1})$, where
    \[
        a_{i}= \begin{cases}
            4 &\text{ if } i\in A(\mathcal{S})
            \\
            10 &\text{ otherwise}.
        \end{cases}
    \]
    Thus \Cref{lem:quantitative-gen-Milnor} implies that each $H_i$ is generated by a set of length $\le L_i$, where $L_0 = 1$ and
    \[
        L_i = a_i L_{i-1} + C L_{i-1}^{1/2} \qquad(1 \le i \le k),
    \]
    where $C$ is a constant.
    Clearly $L_i \ge 4^i$ for all $i$, so
    \[
        \frac{L_i}{a_i L_{i-1}} \le 1 + C 2^{1-i},
    \]
    and it follows easily that
    \[
        L_i \le C' a_1 \cdots a_{i+1}
    \]
    for some constant $C'$.
    In particular
    \[
        L_{k-1} \le C' a_1 \cdots a_k = C' 4^{\mu(\bar G)} \le C'' n^3
    \]

    We have established that some $n$ elements $b_1, \dots, b_n$ of length at most $C'' n^3$ project to linearly independent elements of $V$. The products
    \[
        b_1^{\epsilon_1} \cdots b_n^{\epsilon_n} \qquad (\epsilon_1, \dots, \epsilon_n \in \{0,1\})
    \]
    are all distinct.
    Thus
    \[
        \gamma_X( C'' n^4 ) \ge 2^n.
    \]
    Since there are arbitrarily large such integers $n$, there is a constant $c > 0$ and infinitely many integers $n > 0$ such that
    \[
        \gamma_X(n) \ge \exp(cn^{1/4}),
    \]
    and this is a contradiction to the hypothesis $\log \gamma_X(n) / n^{1/4} \to 0$.
\end{proof}

\begin{remark}
    As to whether the exponent $1/4$ can be improved, an important example is the pro-finite-soluble group
    \[
        \mathcal G = \lim_{\longleftarrow} S_4 \wr \cdots \wr S_4,
    \]
    which is the automorphism group of an infinite rooted $4$-regular tree.
    Although this group is not topologically finitely generated, its derived subgroup $\mathcal G'$ is.
    Let $G = \gen X$ be a finitely generated dense subgroup of $\mathcal G'$.
    Then \Cref{th-main} applies to $G$, so we have established that $\gamma_X(n) > \exp(cn^{1/4})$ infinitely often for some constant $c > 0$.
    Can the exponent $1/4$ be improved in this case?
    Can $G$ even have subexponential growth?
\end{remark}

\section{Stronger gap conjectures}

In \cite{grig-ICM}, Grigorchuk formulated the following stronger version of the gap conjecture.

\begin{conjecture}[$\Gap^*(\beta)$]
    Let $G$ be a group generated by a finite set $X$ and assume $G$ is not virtually nilpotent. Then \(\gamma_X(n) \succeq \exp(n^\beta).\)
\end{conjecture}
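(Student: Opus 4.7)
The conjecture $\Gap^*(\beta)$ is open, and strictly stronger than $\Gap(\beta)$: it demands a uniform lower bound $\gamma_X(n) \ge \exp(cn^\beta)$ for some $c>0$ and all sufficiently large $n$, rather than merely the weaker statement $\gamma_X \not\prec \exp(n^\beta)$. Since no proof is currently known even in the residually soluble setting where \Cref{th-main} establishes $\Gap(\beta)$ for $\beta < 1/4$, any serious proposal is speculative; I will describe the strategy I would first attempt within that class, aiming for $\Gap^*(1/4-\epsilon)$.

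The proof of \Cref{th-main} produces, from a self-centralizing chief factor of rank $n$ in a soluble quotient of $G$, $2^n$ distinct group elements of length at most $C''n^4$, hence $\gamma_X(C''n^4) \ge 2^n$. This lower bound lives only at the discrete sequence of lengths $\ell_k = C'' n_k^4$, where $n_k$ runs through the available chief ranks. Monotonicity of $\gamma_X$ extends it to $\gamma_X(\ell) \ge 2^{n_k}$ throughout $[\ell_k,\ell_{k+1}]$, which has the uniform form $\exp(c\ell^{1/4})$ precisely when the ratios $n_{k+1}/n_k$ are bounded. My plan is therefore to reduce $\Gap^*(\beta)$ within the residually soluble class to a density statement: if the set of self-centralizing chief ranks arising in soluble quotients of $G$ is unbounded, then it meets every interval $[n,Cn]$ for some constant $C=C(G)$.

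The first step is to search for elementary mechanisms that produce chief factors of nearby rank from a given one: restriction of the relevant $\F_p G$-module $V$ to a maximal subgroup, composition factors of tensor or exterior powers of $V$, or subrepresentations of larger factors all shift the rank by controlled amounts. A clean density statement along these lines would, combined with routine bookkeeping copied from the proof of \Cref{th-main}, yield $\Gap^*(1/4-\epsilon)$ for residually soluble groups. If no such density holds at the chief-factor level, the fallback is to exploit the entire modified derived series $\bar G = H_0 \ge \cdots \ge H_k$ used in the proof of \Cref{th-main}: each strict inclusion $H_{i-1} > H_i$ produces new short elements outside $H_i$, and accounting for these across every layer rather than only at the terminal factor $V = H_{k-1}$ might deliver enough distinct elements at every intermediate scale.

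The hard part will be this density step. The proof of \Cref{th-main} is resolutely pointwise: it uses only that chief ranks are unbounded and gives no control over their distribution. The natural test case is the inverse-limit example $\mathcal{G} = \lim_{\longleftarrow} S_4 \wr \cdots \wr S_4$ flagged in the remark, whose natural chief-factor ranks grow like powers of $4$ --- exactly the borderline where monotonicity-based interpolation is tight --- so any viable approach to $\Gap^*$ for residually soluble groups must address this example directly.
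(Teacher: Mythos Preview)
This statement is a conjecture, and the paper does not prove it; indeed the paper explicitly records that its methods ``come close'' but have not succeeded in establishing $\Gap^*(\beta)$ for residually soluble groups for any $\beta > 0$. You correctly identify it as open and offer a speculative strategy, so there is no proof-to-proof comparison to make. It is still worth comparing your proposed line of attack with the paper's own discussion of the obstruction.

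Your plan hinges on the inequality $\gamma_X(C'' n^4) \ge 2^n$ extracted from the proof of \Cref{th-main}, followed by a density argument for the available chief ranks $n$. But the constant $C''$ in that inequality is produced via the quantitative Milnor lemma (\Cref{lem:quantitative-gen-Milnor}) and its iterate along the modified derived series, both of which \emph{assume} the growth bound \cref{eq:theta-growth}. In the proof of \Cref{th-main} this is harmless because \cref{eq:theta-growth} is part of the hypothesis; for $\Gap^*$ it is not. Arguing by contradiction from $\gamma_X \not\succeq \exp(n^\beta)$ yields only $\liminf_n \log\gamma_X(n)/n^\beta = 0$, which is far too weak to feed into \Cref{lem:quantitative-gen-Milnor} (that lemma needs an upper bound valid for \emph{all} $n$). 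So before your density step can do any work, one already needs generator-length bounds along the (modified) derived series that do not rely on \cref{eq:theta-growth}. The paper isolates exactly this as the obstacle: its Question asks whether subexponential growth alone forces $\ell_X(X_k)$ to grow at most exponentially in $k$, and remarks that a positive answer would help. That is the missing ingredient, and your proposal does not engage with it.

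Your density idea and the fallback of harvesting elements from every layer $H_{i-1}/H_i$ are reasonable secondary ingredients, and the $S_4$-wreath example you flag is indeed the right test case. But the primary gap is the one the paper singles out, not the spacing of chief ranks.
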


$\Gap^*(1/2)$ holds for residually nilpotent groups by the proof of \cites{Grig-gap-for-residually-nilpotent,lubotzky--mann}. \Cref{cor:gap-FS(k)} extends $\Gap^*(1/2)$ to residually $\FS(n)$ groups.
In particular $\Gap^*(1/2)$ holds for residually supersoluble groups, because supersoluble implies residually finite-supersoluble.
This was already asserted in \cite{grig-ICM}*{Theorem~5.6} without full explanation.

The generalized Milnor lemma (\Cref{lem:soft-generalized-milnor}) ought to be useful for $\Gap^*(\beta)$, as it shows that any group having a finitely generated subgroup with non-finitely-generated normal closure has growth $\succeq \exp(n^{1/2})$.

Wilson's results do not give $\Gap^*(\beta)$ for residually soluble groups for any $\beta > 0$ (contrary to the claim in \cite{grig-ICM}*{Theorem~5.7}),
and neither have we succeeded in proving it, though our methods come close.
A positive answer to the following question would help (compare \Cref{cor:derived-series-generators}).

\begin{question}
    Let $G = \gen X$ be a finitely generated group of subexponential growth.
    Do there exist generating sets $X_k$ for $G^{(k)}$ for all $k \ge 0$ such that $\ell_X(X_k)$ grows at most exponentially as a function of $k$?
\end{question}

Modifying the order of quantifiers one more time, we can formulate an even stronger version of the gap conjecture, which we label $\Gap^{**}(\beta)$.

\begin{conjecture}[$\Gap^{**}(\beta)$]
    There is a function $f(n) \succeq \exp(n^\beta)$ such that the following holds.
    Let $G$ be a group generated by a finite set $X$ and assume $G$ is not virtually nilpotent. Then $\gamma_X(n) \ge f(n)$ for all $n$.
\end{conjecture}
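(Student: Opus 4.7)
The plan is to attack $\Gap^{**}(\beta)$ within the residually soluble class, upgrading \Cref{th-main} by demanding that the lower bound hold for all $n$ with a universal function rather than only for infinitely many $n$ with a group-dependent constant. The crucial distinction between $\Gap^*(\beta)$ and $\Gap^{**}(\beta)$ is that the latter demands a single function $f(n) \succeq \exp(n^\beta)$ that bounds \emph{every} non-virtually-nilpotent marked group from below at \emph{every} $n$, not just each individually and eventually.

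First, I would revisit the proof of \Cref{th-main} with an eye toward uniformity of constants. The constant $c$ appearing in the bound $\gamma_X(n) \ge \exp(cn^{1/4})$ traces back, through \Cref{lem:quantitative-gen-Milnor} and \Cref{cor:derived-series-generators}, to the hypothetical growth upper bound $\gamma_X(n) \le \exp(Cn^{1/3})$. If $\gamma_X$ ever exceeds a bound of this form then the conclusion is already much stronger than $\Gap^{**}(\beta)$ for $\beta < 1/3$, so $C$, and hence $c$, can plausibly be made to depend only on $|X|$ by a dichotomy argument. The more serious issue is that the proof only produces the lower bound at an infinite sequence of values $n_k \to \infty$, arising from ranks of self-centralizing chief factors of finite soluble quotients of $G$. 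By monotonicity of $\gamma_X$ one at least has $\gamma_X(n) \ge \exp(c n_k^{1/4})$ on each interval $[n_k, n_{k+1})$, and to convert this into a uniform bound of the form $\exp(c'n^{1/4})$ one must either control the density of the sequence $(n_k)$ or accept a weaker exponent. A positive answer to the question posed just before \Cref{lem:quantitative-gen-Milnor} (about exponential bounds on $\ell_X(X_k)$ for groups of merely subexponential growth) would remove the need for the polynomial growth hypothesis \eqref{eq:theta-growth} entirely, and would already take care of the first step above.

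An alternative approach is a compactness argument on the space of marked $k$-generated groups equipped with its natural topology. If $\Gap^{**}(\beta)$ failed within the residually soluble class, one could extract a sequence $(G_i, X_i)$ of residually soluble non-virtually-nilpotent marked groups whose growth functions stay below $\exp(n_i^\beta)$ at points $n_i \to \infty$, pass to a limit $(G, X)$, and try to derive a contradiction. The main obstacle, and in my view the reason this conjecture remains open, is that virtual nilpotency is not a closed condition in this topology: the limit of a sequence of non-virtually-nilpotent groups can easily be virtually nilpotent or even trivial, yielding no contradiction. Any compactness argument would need to be paired with a limit-stable, quantitative notion of ``non-virtually-nilpotent'' — perhaps a lower bound on the growth of the index of a putative nilpotent-by-finite subgroup — and producing such a notion seems to require essentially the statement we are trying to prove.
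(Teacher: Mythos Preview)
The statement you were asked to prove is a \emph{conjecture}, not a theorem: the paper does not prove $\Gap^{**}(\beta)$ at all. Immediately after stating it, the paper says explicitly that ``this `uniform' version of the gap conjecture has not yet been established even for residually nilpotent groups'', and only remarks that the Lubotzky--Mann / Grigorchuk method handles the special case of nonlinear residually nilpotent groups (with $f(n) = p(n)$ the partition function). There is therefore no proof in the paper for your proposal to be compared against.

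Your proposal is not a proof either, and to your credit you essentially acknowledge this. Both of your suggested approaches end at genuine obstacles that you yourself identify. For the first, you correctly observe that the argument of \Cref{th-main} only yields the lower bound along a sparse sequence of $n$'s coming from ranks of self-centralizing chief factors, with no control over the gaps $n_{k+1}/n_k$; monotonicity of $\gamma_X$ does not convert this into a uniform $\exp(c' n^{1/4})$ bound without such control. (Incidentally, the question you invoke is stated in the final section of the paper, not ``just before \Cref{lem:quantitative-gen-Milnor}''.) For the second, you are right that virtual nilpotency is not closed in the space of marked groups, so a naive compactness argument cannot work. In short: your analysis of the difficulties is sound and in line with the paper's own discussion, but no proof has been produced, because none currently exists.
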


This ``uniform'' version of the gap conjecture has not yet been established even for residually nilpotent groups, though remarkably the method of \cites{Grig-gap-for-residually-nilpotent,lubotzky--mann} does give it for \emph{nonlinear} residually nilpotent groups, and one may take $f(n)$ to be the partition function $p(n)$, which famously satisfies
$\log p(n) \sim \pi \sqrt{2n/3}$
(Hardy--Ramanujan).

The theorem of Shalom--Tao (see \cite{shalom-tao}*{Corollary~1.10}) has this kind of strong uniformity: their result is that there is some universal constant $c > 0$ such that every non-virtually-nilpotent finitely generated group $G = \gen {X}$ satisfies
\[
    \gamma_X(n) \ge n^{c (\log \log n)^c} \qquad (n > 1/c).
\]
For another result with this remarkable kind of uniformity, see the recent paper of Lyons--Mann--Tointon--Tessera~\cite{LMTT}.

\bibliography{refs}

\end{document}